\documentclass[12pt]{amsart}
\usepackage{times}
\DeclareMathAlphabet{\mathpzc}{OT1}{pzc}{m}{it}

\usepackage[T1]{fontenc}
\usepackage{dsfont}
\usepackage{mathrsfs}
\usepackage[colorlinks]{hyperref}
\usepackage{xcolor}
\usepackage[a4paper,asymmetric]{geometry}
\usepackage{mathscinet}
\usepackage{fullpage}
\usepackage{latexsym}
\usepackage{graphicx}
\usepackage{epstopdf}
\usepackage{amsthm}
\usepackage{amssymb}
\usepackage{amsfonts}
\usepackage{amsmath}
\usepackage{cite}
\usepackage{float}  
\usepackage{subfigure} 
\usepackage{caption} 
\usepackage[ruled,linesnumbered]{algorithm2e}
\usepackage[square]{natbib}\setcitestyle{numbers,list}
\newtheorem{theorem}{Theorem}[section]
\newtheorem{thm}[theorem]{Theorem}

\newtheorem{lemma}[theorem]{Lemma}
\newtheorem{lem}[theorem]{Lemma}
\newtheorem{proposition}[theorem]{Proposition}

\newtheorem{corollary}[theorem]{Corollary}
\newtheorem{assumption}[theorem]{Assumption}
\theoremstyle{definition}

\newtheorem{defn}[theorem]{Definition}

\theoremstyle{remark}
\newtheorem{remark}[theorem]{Remark}
\newtheorem{rem}[theorem]{Remark}
\numberwithin{equation}{section}

 \DeclareMathAlphabet{\mathpzc}{OT1}{pzc}{m}{it}

  \newcommand{\dif}{\mathrm{d}}

 \newcommand{\Y}{\mathcal{Y}}

 \newcommand{\W}{\mathcal{W}}           
 \newcommand{\cov}{\mathrm{cov}}       
 \newcommand{\E}{\mathbb{E}}            
 
 \newcommand{\e}{\varepsilon}

 \newcommand{\Ll}{\langle}
 \newcommand{\Rr}{\rangle}

 \newcommand{\R}{\mathbb{R}}
 
 \newcommand{\PP}{\mathbb{P}}
 \newcommand{\mcl}{\mathcal}
 
 \newcommand{\Be}{\begin{equation}}
 \newcommand{\Ee}{\end{equation}}
  \newcommand{\Bes}{\begin{equation*}}
 \newcommand{\Ees}{\end{equation*}}
  \newcommand{\Bey}{\begin{eqnarray}}
 \newcommand{\Eey}{\end{eqnarray}}
 \newcommand{\Beys}{\begin{eqnarray*}}
 \newcommand{\Eeys}{\end{eqnarray*}}
 \newcommand{\BT}{\begin{thm}}
 \newcommand{\ET}{\end{thm}}
 \newcommand{\Bp}{\begin{proof}}
 \newcommand{\Ep}{\end{proof}}
 \newcommand{\BL}{\begin{lem}}
 \newcommand{\EL}{\end{lem}}
 \newcommand{\BP}{\begin{proposition}}
 \newcommand{\EP}{\end{proposition}}
 \newcommand{\BC}{\begin{corollary}}
 \newcommand{\EC}{\end{corollary}}
 \newcommand{\BR}{\begin{rem}}
 \newcommand{\ER}{\end{rem}}
 \newcommand{\BD}{\begin{defn}}
 \newcommand{\ED}{\end{defn}}
 \newcommand{\BI}{\begin{itemize}}
 \newcommand{\EI}{\end{itemize}}

\usepackage{marginnote}
\marginparwidth60pt 

\pdfoptionpdfminorversion=6

\begin{document}

\title[]
{Self-normalized Cram\'er-type Moderate Deviation of Stochastic Gradient Langevin Dynamics}

\author[H. Dai]{Hongsheng Dai}
\address[H. Dai]{School of Mathematics, Statistics and Physics, Newcastle University, UK}
\email{hongsheng.dai@newcastle.ac.uk}

\author[X. Fan]{Xiequan Fan}
\address[X. Fan]{School of Mathematics and Statistics, Northeastern University at Qinhuangdao, China}
\email{fanxiequan@hotmail.com}

\author[J. Lu]{Jianya Lu$^*$}
\address[J. Lu]{School of Mathematics, Statistics and Actuarial Science, University of Essex, UK}
\email{jianya.lu@essex.ac.uk}

\maketitle

\begin{abstract}

In this paper, we study the self-normalized Cram\'er-type moderate deviation of the empirical measure of the stochastic gradient Langevin dynamics (SGLD). Consequently, we also derive the Berry-Esseen bound for SGLD. Our approach is by constructing a stochastic differential equation (SDE) to approximate the SGLD and then applying Stein’s method, as developed in \cite{FSX19, Lu2022central},  to decompose the empirical measure into a martingale difference series sum and a negligible remainder term.

{\bf Key words}: Self-normalized Cram\'er-type Moderate Deviation; Stochastic Gradient Langevin Dynamics; Stein's method; Diffusion approximation; Berry-Esseen bound  
	
{\bf MSC2020}: 60F10; 62E20; 60E05; 62F12; 62L20 
\end{abstract}

\section{Introduction}\label{sec:intro}

For a nonconvex stochastic loss function $\psi(\omega,\zeta): \R^d\times\R^r\to\R$, where $\zeta\in\R^r$ is a random variable with probability distribution $\nu$, we consider the following optimization problem
\begin{eqnarray*}
	\omega^*=\text{argmin}_{\omega\in\R^d}P(\omega),\quad P(\omega)=\E_{\zeta\sim\nu}\psi(\omega,\zeta).
\end{eqnarray*}
To find the minimizer $\omega^*$, \cite{welling2011bayesian} proposed the stochastic gradient Langevin dynamic (SGLD) algorithm, which has been widely applied to optimization problems. The iteration of the SGLD is given as follows,
\begin{eqnarray}\label{e:sgld}
	\omega_k=\omega_{k-1}-\eta\nabla\psi(\omega_{k-1},\zeta_k)+\sqrt{\eta\delta}\xi_{k},
\end{eqnarray}	
where $\eta>0$ is the step size, $\delta>0$ is the inverse temperature parameter, $(\xi_k)_{k\ge1}$ are sequence of independent and identically distributed (i.i.d.) standard $d$-dimensional normal random vectors and $(\zeta_k)_{k\ge1}$ are i.i.d.\ samples from $\nu$.

As the number of iterations $k$ tends to infinity, \cite{raginsky2017non} showed that \eqref{e:sgld} can find the approximate global minimizer. See \cite{xu2018global, lamperski2021projected, chen2020stationary} for more details on the convergence of the SGLD. Unlike the stochastic gradient descent (SGD) algorithm, which may converge to local minima in non-convex optimization problems, the SGLD algorithm benefits from the inclusion of Gaussian noise in its iterations. This added noise allows SGLD to more effectively explore the parameter space, making it well-suited for solving non-convex problems
\cite{chen2020stationary, zhang2017hitting}.


For the iteration \eqref{e:sgld}, it is natural to consider it as a discretization to a continuous dynamic for a given step size $\eta$. We consider the following stochastic differential equation (SDE) to approximate the SGLD algorithm,
\begin{eqnarray}\label{e:sde}
	\dif X_t=-\nabla P(X_t)\dif t+Q_{\eta,\delta}(X_t)\dif B_t,
\end{eqnarray}
where $B_t$ is a $d-$dimensional standard Brownian motion and the diffusion matrix $Q_{\eta,\delta}(\cdot)\in\R^{d\times d}$ will be defined later. Significant work has been done in \cite{feng2019uniform, hu2019diffusion} on comparing stochastic algorithms to their  corresponding SDE approximations, and on establishing the diffusion approximation bound of 
\begin{eqnarray}\label{e:convergence}
	\sup_{h\in\mathcal H}|\E h(\omega_k)-\E h(X_{k\eta})|
\end{eqnarray}
for a family $\mathcal H$ of test functions $h$. Different choices of  $\mathcal{H}$ correspond to different distance metrics, such as the Wasserstein$-1$ distance for the Lipschitz function $h$ and the total variation distance for bounded $h$. The diffusion approximation provides valuable insights into algorithms by viewing them as continuous dynamics. Acting as a bridge, it enables the application of continuous dynamic analysis methods to study the properties of stochastic algorithms. See \cite{li2017stochastic, li2019stochastic, chen2022approximation}  for more details.

Under suitable conditions on $\psi$, \eqref{e:sgld} and \eqref{e:sde} are exponential ergodic with invariant measures $\pi_\eta$ and $\pi$ respectively. For the SGLD and its invariant measure $\pi_\eta$, we construct an empirical measure
$\Pi_\eta$ as a statistic of $\pi_\eta$, where
\begin{eqnarray*}
	\Pi_\eta(\cdot)=\frac{1}{m}\sum_{k=0}^{m-1}\delta_{\omega_k}(\cdot).
\end{eqnarray*}
Here $\delta_{\omega_k}(\cdot)$ is the Dirac measure of $\omega_k$. Since \eqref{e:convergence} converges to zero as $k\to\infty$ and $\eta\to0$, given test function $h:\R^d \rightarrow \R$, it is natural to consider the asymptotic property of $\Pi_\eta(h)$, i.e.,  $\int h\dif \Pi_\eta$.

The study of self-normalized Cram\'er type moderate deviation (SNCMD) explores the deviation properties of random variables and has developed over the last decades, see \cite{shao1999cramer, JSW03} for the results of independent random variables. For dependent random variables, \cite{CSWX16} studied the moderate deviation under mixing conditions, \cite{Fan2019self} focused on the self-normalized Cram\'er type moderate deviation for martingales, and \cite{Fan2020} analysed stationary sequences. We refer the reader to \cite{jiang2022deviation, shao2016cramer, zhang2023cramer} for further reference.  However, for the iterative output of a stochastic algorithm, which is a sequence of dependent random variables, there has been limited analysis of SNCMD for it. See  \cite{Teh2016, gao2023moderate} for the fluctuation analysis of stochastic algorithms. 

 In this paper, we analyse the self-normalized Cram\'er type moderate deviation of $\Pi_\eta(h)$ with Lipschitz test function $h$. Specifically, given a normalized term $\mathcal{Y}_\eta$, we compare the tail probability of $\Pi_\eta(h)/\sqrt{\mathcal{Y}_\eta}$
 after scaling and centralization, i.e., $\sqrt{m\eta}(\Pi_\eta(h)-\pi(h))/\sqrt{\delta\mathcal{Y}_\eta}$,
 with the tail probability of a standard normal distribution $N(0,I_d)$. Consequently, we also derive the Berry-Esseen bound. 
 
Using the diffusion approximation and Stein's method, we have, for the first time, investigated the self-normalized Cram\'er type moderate deviation of the SGLD algorithm, which provides a novel approach and perspective on the analysis of the asymptotic properties of the SGLD algorithm. Although \cite{Lu2022central, fan2024normalized} carried out a similar analysis, i.e. examining the SNCMD of the Langevin dynamics, their results are based on the relatively restrictive assumption that both the gradient and test functions are bounded.
In contrast, our results extend this assumption by replacing the boundedness assumption with a Lipschitz condition. To relax this condition, we employ a truncation technique in the proof. Additionally, our work provides a new application of Stein’s method within the realm of machine learning.

The approach to proving the main results relies on Stein’s method and a standard decomposition of $\Pi_\eta(h)$, with similar ideas found in \cite{Lu2022central, Teh2016}. The strategy of the proof begins with a diffusion approximation for the stochastic algorithm, constructing a corresponding SDE. Under some mild conditions, the SDE has an ergodic measure $\pi$, and its associated Stein's equation has a solution with good regularity properties. Using Stein's equation, we decompose $\Pi_\eta(h)$ into a martingale difference series sum $\mathcal{H}_\eta$ and a remainder $\mathcal{R}_\eta$. For $\mathcal{H}_\eta$, we apply the martingale SNCMD theorem in \cite{fan2024normalized} to compare it with the standard normal distribution. Additionally, we show that the remainder $\mathcal R_\eta$ is exponentially negligible using concentration inequalities.

The paper is organized as follows. Diffusion approximation and our main results are stated in Section \ref{sec:2}. In Section \ref{sec:3}, we provide some preliminary lemmas. 
In Section \ref{sec:5}, we give the proof of SNCMD and Berry-Esseen bound. The details of the proof of preliminary lemmas are deferred to Sections \ref{appen:regularity} and \ref{appen:3}. 

We finish this section by introducing some notations which will be frequently used in the sequel. The Euclidean norm and the inner product of vectors $x, y \in \R^d$ are denoted by $|x|$ and $\langle x, y \rangle$, respectively. For higher-rank tensors, we denote their norm by $\|\cdot\|$. For any two matrices $A,B\in \R^{d\times d}$, the Hilbert-Schmidt norm is denoted by
$\|A\|_{\text{HS}}=\sqrt{\sum_{i,j=1}^d A_{ij}^2}=\sqrt{\textrm{Tr}(A^\top A)}$ and their inner product by 
$\Ll A,B\Rr_{\text{HS}}:=\sum_{i,j=1}^d A_{ij}B_{ij}$, where $\top$ is the transpose operator. The symbols $C$ and $c$ represent positive constants whose values may vary from line to line. Let $\text{Lip}_1(\R^d)$ be the family of Lipschitz function defined on $\R^d$ with Lipschitz constant $1$. We denote the conditional expectation $\E[\cdot|\omega_k]$ and conditional probability $\mathbb{P}(\cdot|\omega_k)$ by $\E_k[\cdot]$ and  $\mathbb{P}_k(\cdot)$, respectively. Finally, $\Phi(x)$ represents the cumulative distribution function for standard normal random variables.

\section{Diffusion approximation and Main results}\label{sec:2}


We first construct the diffusion approximation. Rewriting \eqref{e:sgld}, we have
\begin{eqnarray}\label{e:sgld1}
	\omega_k&=&\omega_{k-1}-\eta\nabla P(\omega_{k-1})+\eta\nabla P(\omega_{k-1})-\eta\nabla\psi(\omega_{k-1},\zeta_k)+\sqrt{\eta\delta}\xi_{k}\nonumber\\
	&:=&\omega_{k-1}-\eta\nabla P(\omega_{k-1})+\sqrt\eta V_{\eta,\delta}(\omega_{k-1},\zeta_k,\xi_{k}),
\end{eqnarray}
where 
\begin{eqnarray*}
	V_{\eta,\delta}(\omega_{k-1},\zeta_k,\xi_{k})&=& \sqrt\eta\nabla P(\omega_{k-1})-\sqrt\eta\nabla\psi(\omega_{k-1},\zeta_k)+\sqrt{\delta}\xi_{k}.
\end{eqnarray*}
As  $\E\psi(\cdot,\zeta)=P(\cdot)$, a straightforward calculation implies
\begin{eqnarray*}
	\E_{k-1}[V_{\eta,\delta}(\omega_{k-1},\zeta_k,\xi_{k})]&=&0
\end{eqnarray*}
and
\begin{eqnarray*}
	\cov[V_{\eta,\delta}(\omega_{k-1},\zeta_k,\xi_{k})|\omega_{k-1}]=\eta\Sigma(\omega_{k-1})+\delta I_d,
\end{eqnarray*}
where 
\begin{eqnarray*}
	\Sigma(x)=\E[\nabla\psi(x,\zeta)\nabla\psi(x,\zeta)^\top]-\nabla P(x)\nabla P(x)^\top
\end{eqnarray*}
and $I_d$ is the $d-$dimensional identity matrix. Thus it is natural to consider the following SDE to approximate \eqref{e:sgld}, 
\begin{eqnarray}\label{e:sde1}
	\dif X_t=-\nabla P(X_t)\dif t+Q_{\eta,\delta}(X_t)\dif B_t,
\end{eqnarray}
where 
\begin{eqnarray*}
	Q_{\eta,\delta}(x)=\big(\eta\Sigma(x)+\delta I_d\big)^{\frac12}
\end{eqnarray*}
is a positive definite matrix and $B_t$ is a $d-$dimensional standard Brownian motion. For the cost function and random variable $\zeta$, we introduce the following conditions.

\begin{assumption}\label{assu1}
	There exist constants $L, K_1 >0$ and $K_2 \ge 0$ such that for every $x,y\in \R^d$, $z\in\R^r$, 
	\begin{eqnarray}\label{e:lip}
		|\nabla\psi(x,z)-\nabla\psi(y,z)|\le L|x-y|,
	\end{eqnarray}
	\begin{eqnarray}\label{e:dissi}
		\Ll x-y,-\nabla \psi(x,z)+\nabla \psi(y,z)\Rr\le -K_1|x-y|^2+K_2.
	\end{eqnarray}
\end{assumption}

\begin{assumption}\label{assu2}
	The random variable $\nabla\psi(x,\zeta)$ is sub-Gaussian for any $x\in\R^d$, that is, there exist positive constants $K_\zeta$ and $C$ such that 
	\begin{eqnarray*}
		\E\exp\{K_\zeta|\nabla\psi(x,\zeta)|^2\} \le C.
	\end{eqnarray*} 
\end{assumption}
\begin{remark}
For the ease of the proof, we assume that $K_\zeta=1$.
\end{remark}

For the SGLD algorithm \eqref{e:sgld} and its corresponding SDE \eqref{e:sde1}, Lemma \ref{lem:ergodic} below implies that they are exponential ergodic with invariant measures $\pi_\eta$ and $\pi$ respectively. Then we have the following Wasserstein-1 distance bound between $\pi$ and $\pi_\eta$.  
\begin{theorem}\label{lem: pipieta}
Suppose Assumptions \ref{assu1} and \ref{assu2} hold, one has
\begin{eqnarray}
W_1(\pi,\pi_\eta)\le C\eta^{1/2},
\end{eqnarray}
where
\begin{eqnarray*}
	W_1(\pi,\pi_\eta)=\sup_{h\in\mathrm{Lip_1}}|\pi(h)-\pi_\eta(h)|
\end{eqnarray*}
is the Wasserstein-$1$ distance.
\end{theorem}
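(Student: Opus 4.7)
The plan is to combine the exponential ergodicity of both the SGLD iteration and its SDE approximation with a synchronous coupling argument that is uniform in time. By Lemma \ref{lem:ergodic}, for any common initial point $x_0$ and any integer $m\ge 1$, the triangle inequality yields
\[
W_1(\pi,\pi_\eta)\le W_1\bigl(\pi,\mathrm{Law}(X_{m\eta}^{x_0})\bigr)+W_1\bigl(\mathrm{Law}(X_{m\eta}^{x_0}),\mathrm{Law}(\omega_m^{x_0})\bigr)+W_1\bigl(\mathrm{Law}(\omega_m^{x_0}),\pi_\eta\bigr),
\]
and the first and third terms are each controlled by $Ce^{-\lambda m\eta}$, uniformly in small $\eta$. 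Choosing $m$ so that $m\eta\sim\lambda^{-1}\log(1/\eta)$ makes these contributions $O(\eta)$, reducing the problem to bounding the middle distance by $C\eta^{1/2}$.

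For the middle term I would construct a synchronous coupling which identifies the Gaussian noise in \eqref{e:sgld} with the driving Brownian motion of \eqref{e:sde1} via $\sqrt{\eta\delta}\,\xi_k=\sqrt{\delta}\,(B_{k\eta}-B_{(k-1)\eta})$, with $X_0=\omega_0$. Writing $\Delta_k:=X_{k\eta}-\omega_k$, subtracting \eqref{e:sgld1} from the integral form of \eqref{e:sde1} gives the one-step decomposition
\[
\Delta_{k+1}=\Delta_k-\int_{k\eta}^{(k+1)\eta}\!\!\bigl[\nabla P(X_s)-\nabla P(\omega_k)\bigr]\mathrm{d}s+\eta\bigl[\nabla\psi(\omega_k,\zeta_{k+1})-\nabla P(\omega_k)\bigr]+\int_{k\eta}^{(k+1)\eta}\!\!\bigl[Q_{\eta,\delta}(X_s)-\sqrt{\delta}\,I_d\bigr]\mathrm{d}B_s.
\]
Expanding $|\Delta_{k+1}|^2$, the inner product of $\Delta_k$ with $\nabla P(X_{k\eta})-\nabla P(\omega_k)$ supplies the contractive term through the dissipativity \eqref{e:dissi} (after averaging over $\zeta$); the SGD discrepancy has variance $O(\eta^2)$ by Assumption \ref{assu2}; and the stochastic-integral mismatch contributes at most $O(\eta^3)$, since $\|Q_{\eta,\delta}(x)-\sqrt\delta\,I_d\|=O(\eta)$ combined with It\^o's isometry over an interval of length $\eta$.

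Assembling these estimates produces a recursion of the form $\E|\Delta_{k+1}|^2\le(1-c\eta)\E|\Delta_k|^2+C\eta^2$, which iterates to $\sup_k\E|\Delta_k|^2\le C\eta$. By the Cauchy-Schwarz inequality this gives $W_1\bigl(\mathrm{Law}(X_{m\eta}),\mathrm{Law}(\omega_m)\bigr)\le(\E|\Delta_m|^2)^{1/2}\le C\eta^{1/2}$, and plugging back into the triangle inequality completes the proof. The main obstacle is making the one-step recursion tight: one has to split $\nabla P(X_s)-\nabla P(\omega_k)$ into a dissipativity-contraction piece $\nabla P(X_{k\eta})-\nabla P(\omega_k)$ and a short-time fluctuation piece $\nabla P(X_s)-\nabla P(X_{k\eta})$, control the latter via the $O(\sqrt{s-k\eta})$ It\^o regularity of $X$, and verify that the remaining cross-terms and stochastic-integral interactions can be absorbed by Young's inequality without destroying the contraction rate $1-c\eta$.
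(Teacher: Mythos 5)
Your synchronous-coupling argument has a genuine gap that stems from the form of the dissipativity hypothesis. Assumption \ref{assu1} only gives dissipativity \emph{at infinity}: $\langle x-y,-\nabla\psi(x,z)+\nabla\psi(y,z)\rangle\le -K_1|x-y|^2+K_2$ with $K_2\ge 0$. Applying this (or the inherited bound \eqref{e:pdissi}) in your one-step expansion of $|\Delta_{k+1}|^2$ yields
\[
\E|\Delta_{k+1}|^2\le(1-2K_1\eta)\E|\Delta_k|^2+2K_2\eta+C\eta^2,
\]
not the claimed $(1-c\eta)\E|\Delta_k|^2+C\eta^2$. The extra $2K_2\eta$ is an $O(\eta)$ additive term, and it dominates: iterating from $\Delta_0=0$ gives $\E|\Delta_m|^2\le (K_2/K_1)(1-(1-2K_1\eta)^m)$, which saturates at $K_2/K_1=O(1)$ once $m\eta\gtrsim 1$. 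Synchronous coupling simply does not contract when the drift is only dissipative outside a ball; in the region $|\Delta_k|^2<K_2/K_1$ the quadratic term is useless and the best one has is the Lipschitz bound $\le L|\Delta_k|^2$, which is expansive. The fallback estimate $\E|\Delta_{k+1}|^2\le(1+2L\eta)\E|\Delta_k|^2+C\eta^2$ does iterate to $\E|\Delta_m|^2\le C\eta\,(e^{2Lm\eta}-1)$, but with your choice $m\eta\sim\lambda^{-1}\log(1/\eta)$ this becomes $C\eta^{1-2L/\lambda}$, which is worse than $\eta$ and can even diverge unless $\lambda>2L$ --- a relation that is not provided by Assumption \ref{assu1} and is generically false (the ergodicity rate $\lambda$ coming from Lyapunov arguments is typically much smaller than the Lipschitz constant $L$).

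The paper avoids coupling entirely. It exploits the fact that $\omega_0\sim\pi_\eta$ makes $\E[f(\omega_1)-f(\omega_0)]=0$ for the Stein solution $f$, Taylor-expands this identity, matches the first- and second-order terms against the generator $\mathcal{L}f$, and reads off $\pi_\eta(h)-\pi(h)=\E[\mathcal{L}f(\omega_0)]$ as an explicit remainder. The remainder is of order $\E|\Delta\omega_0|^3/\eta\sim\eta^{1/2}$ once the third-derivative regularity \eqref{e:regularity4} of $f$ (proved by interior Schauder estimates in Lemma \ref{lem:stein}) is used. This route needs only ergodicity and Stein-solution regularity, both of which survive under dissipativity-at-infinity; it makes no use of a pathwise contraction. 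To fix your proof along coupling lines you would need reflection coupling or a concave distance function to handle the non-contractive ball, which is a substantially different and more delicate argument than synchronous coupling.
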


Let $f$ be the solution to the following Stein's equation:
\begin{align}\label{e:stein}
	h-\pi(h)=\mathcal{L}f,
\end{align}
where $\mathcal L$ is the generator of \eqref{e:sde1} given by 
\begin{equation}\label{e:generator}
	\mathcal L g(x)=\Ll -\nabla P(x), \nabla g(x) \Rr+\frac1 2\Ll Q_{\eta,\delta}(x),\nabla^2g(x) \Rr_{\text{HS}}\,, 
	\quad g\in\mathcal D(\mathcal L).
\end{equation}
Denote
\begin{align*}
	\Y_\eta=\frac{1}{m}\sum_{k=0}^{m-1} |\nabla f(\omega_k)|^2,\quad 	\W_\eta= \frac{\sqrt{m\eta}(\Pi_\eta(h)- \pi(h))}{\sqrt{\delta	\Y_\eta}}.
\end{align*}
We have the following SNCMD of the SGLD algorithm.

\begin{theorem}\label{thm-self-normal}
Suppose Assumptions \ref{assu1} and \ref{assu2} hold. Let $\omega_0\sim\pi_\eta$ and $h \in \mathrm{Lip}_1(\R^d,\R)$, set $m=o(\eta^{-2})$ with $m\eta\to\infty$. Then we have:

(i) as $m\le\eta^{-\frac{13}8}\delta^{-\frac98}$,
\begin{eqnarray*}
	\Big| \frac{\PP(\mathcal W_\eta> x)}{1-\Phi(x)}
	-1 \Big| & \le & 
	C\big(x^3m^{-\frac14}+(1+x)m^{-\frac14}\ln m+x^6\eta^{\frac12}\delta^{-\frac12}\big)
\end{eqnarray*}
uniformly for $0\le x=o(\eta^{{-\frac{1}{12}}}\delta^{\frac{1}{12}})$ as $\eta$ tends to zero and $m$ tends to infinity.  

(ii) as $m>\eta^{-\frac{13}8}\delta^{-\frac98}$,
\begin{eqnarray*}
	\Big| \frac{\PP(\mathcal W_\eta> x)}{1-\Phi(x)}
	-1 \Big| & \le & 
	C\big(x^3(m\eta\delta)^{-\frac12}+\sqrt m\eta\delta x+ m^{-\frac14}\ln m\big)
\end{eqnarray*}
uniformly for $0\le x=o( (m\eta\delta)^{\frac16}\wedge (\sqrt m\eta\delta)^{-1})$ as $\eta$ tends to zero and $m$ tends to infinity. 

Moreover, the same results hold when $\mathcal{W}_\eta$ is replaced by $-\mathcal{W}_\eta$.
\end{theorem}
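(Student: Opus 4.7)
The plan is to follow the Stein-method decomposition outlined in the introduction and then apply the self-normalized martingale Cramér-type moderate deviation theorem of \cite{fan2024normalized}. Starting from the Stein equation \eqref{e:stein} and Taylor-expanding $f(\omega_k)$ around $\omega_{k-1}$ via \eqref{e:sgld1}, taking the conditional expectation yields
\begin{eqnarray*}
\E_{k-1}[f(\omega_k)]-f(\omega_{k-1})=\eta\,\mathcal{L}f(\omega_{k-1})+\eta^2\rho_k,
\end{eqnarray*}
where $\rho_k$ involves $\nabla^2 f$, $\nabla^3 f$, and higher moments of $\nabla\psi(\omega_{k-1},\zeta_k)$. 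Summing over $k=1,\dots,m$ and using $\Pi_\eta(h)-\pi(h)=\frac{1}{m}\sum_{k=0}^{m-1}\mathcal{L}f(\omega_k)$ leads, after telescoping the martingale differences $D_k=f(\omega_k)-\E_{k-1}[f(\omega_k)]$, to the decomposition $\sqrt{m\eta}\,(\Pi_\eta(h)-\pi(h))=\mathcal{H}_\eta+\mathcal{R}_\eta$ with principal martingale
\begin{eqnarray*}
\mathcal{H}_\eta=-\frac{1}{\sqrt{m}}\sum_{k=1}^{m}\nabla f(\omega_{k-1})\cdot V_{\eta,\delta}(\omega_{k-1},\zeta_k,\xi_k),
\end{eqnarray*}
and $\mathcal{R}_\eta$ gathering the boundary term $(f(\omega_m)-f(\omega_0))/\sqrt{m\eta}$, the higher-order Taylor terms from $\rho_k$, and the quadratic correction relating $D_k$ to its linearisation in $V_{\eta,\delta}$.

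Next I would identify the correct self-normalizer. Using $\E_{k-1}[V_{\eta,\delta}V_{\eta,\delta}^{\top}]=\eta\Sigma(\omega_{k-1})+\delta I_d$,
\begin{eqnarray*}
\sum_{k=1}^{m}\E_{k-1}\!\left[\left(\frac{1}{\sqrt{m}}\nabla f(\omega_{k-1})\cdot V_{\eta,\delta}\right)^{2}\right]=\delta\,\mathcal{Y}_\eta+\frac{\eta}{m}\sum_{k=1}^{m}\nabla f(\omega_{k-1})^{\top}\Sigma(\omega_{k-1})\nabla f(\omega_{k-1}),
\end{eqnarray*}
so the quadratic variation of $\mathcal{H}_\eta$ agrees with $\delta\mathcal{Y}_\eta$ up to an $O(\eta)$ correction. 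To make the above Taylor expansions rigorous I would invoke the regularity estimates for $\nabla^{j}f$, $j=1,2,3$, to be established in Section \ref{appen:regularity}, which under Assumption \ref{assu1} provide at most polynomial growth in $|x|$; these bounds are the principal place where the Lipschitz (rather than bounded) test function assumption enters, generalising \cite{Lu2022central}. Because Assumption \ref{assu2} supplies only a sub-Gaussian, not a bounded, gradient, I would truncate $V_{\eta,\delta}$ at level $T_m\asymp\sqrt{\ln m}$; by Assumption \ref{assu2} the resulting truncation tail is exponentially small on the moderate-deviation range of $x$.

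With the truncated martingale $\mathcal{H}_\eta$ and near-matching self-normalizer in hand, I would apply the self-normalized Cramér-type moderate deviation theorem for martingales from \cite{fan2024normalized}. Its output naturally splits into an intrinsic martingale error (from the Lyapunov/Lindeberg conditions after truncation, yielding the $m^{-1/4}\ln m$ and $x^3$-type contributions) and a variance/remainder error (from the $O(\eta)$ quadratic-variation mismatch together with $\mathcal{R}_\eta$). Two competing ways to estimate the latter give either $x^6\eta^{1/2}\delta^{-1/2}$ or $x^3(m\eta\delta)^{-1/2}+\sqrt{m}\eta\delta\,x$; the threshold $m\asymp\eta^{-13/8}\delta^{-9/8}$ specified in the hypothesis marks exactly where the two bounds coincide, producing the two regimes (i) and (ii). The case $-\mathcal{W}_\eta$ follows by applying the same argument with $-h$ in place of $h$.

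The main obstacle is to control $\mathcal{R}_\eta$ in the tail rather than in $L^{2}$: on the moderate-deviation scale one needs $\PP(|\mathcal{R}_\eta|\ge \epsilon x\sqrt{\delta\mathcal{Y}_\eta})(1-\Phi(x))^{-1}=o(1)$. The boundary piece $(f(\omega_m)-f(\omega_0))/\sqrt{m\eta}$ is handled through the stationarity $\omega_0\sim\pi_\eta$, the polynomial growth of $f$, and uniform-in-$k$ sub-Gaussian moment bounds on $\omega_k$ supplied by the dissipativity in Assumption \ref{assu1}; the Taylor-remainder sum is, after truncation, a sum of conditionally sub-Gaussian random variables whose tail is controlled by Bernstein-type concentration. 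The delicate balancing act is in the choice of $T_m$: simultaneously killing the truncation tail beyond $x$, verifying the Lyapunov condition of the martingale SNCMD with the correct $m$-dependence, and keeping the $O(\eta)$ variance-approximation term compatible with the Cramér rate—this is precisely where the hypothesis $m=o(\eta^{-2})$ enters.
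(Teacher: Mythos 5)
The broad architecture of your proposal matches the paper's: Stein decomposition of $\sqrt{m\eta/\delta}\,(\Pi_\eta(h)-\pi(h))$ into a martingale plus a remainder, application of \cite[Lemma 3.5]{fan2024normalized}, and concentration-inequality control of the remainder.  However, two concrete choices you make diverge from the paper in ways that would not deliver the stated bound.

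\textbf{The martingale does not match the self-normalizer.}  You take
$\mathcal{H}_\eta=-\frac{1}{\sqrt m}\sum_k\nabla f(\omega_{k-1})\cdot V_{\eta,\delta}(\omega_{k-1},\zeta_k,\xi_k)$, whose predictable quadratic variation is $\delta\mathcal{Y}_\eta+\frac{\eta}{m}\sum_k\nabla f(\omega_{k-1})^\top\Sigma(\omega_{k-1})\nabla f(\omega_{k-1})$.  But the theorem self-normalizes by $\sqrt{\delta\mathcal{Y}_\eta}$, so you would have to carry the $O(\eta)$ variance mismatch through the Cram\'er-type ratio $\PP(\cdot>x)/(1-\Phi(x))$, which amplifies variance errors by $e^{x^2/2}$-type factors.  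The paper avoids this entirely: it keeps only the Gaussian noise in the martingale,
$\mathcal{H}_\eta=-\frac{1}{\sqrt m}\sum_{k=0}^{m-1}\langle\nabla f(\omega_k),\xi_{k+1}\rangle$, whose predictable quadratic variation is \emph{exactly} $\mathcal{Y}_\eta$, and relegates the $\sqrt\eta(\nabla P-\nabla\psi)$ contribution to the explicit remainder term $\mathcal{R}_{\eta,2}$.  That exact alignment between martingale variance and self-normalizer is what allows \cite[Lemma 3.5]{fan2024normalized} to be applied directly, and you do not recover it.

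\textbf{The truncation threshold is too low.}  You truncate the noise $V_{\eta,\delta}$ at level $T_m\asymp\sqrt{\ln m}$, giving a truncation tail only polynomially small in $m$.  But the theorem is asserted uniformly on a moderate-deviation range $0\le x=o(\eta^{-1/12}\delta^{1/12})$, where for $m\asymp\eta^{-2}$ one has $1-\Phi(x)\asymp\exp(-x^2/2)$ with $x^2$ up to order $m^{1/12}$; a polynomially small tail would then swamp $1-\Phi(x)$.  The paper instead truncates the \emph{state}, writing $\widehat{\nabla f}(\omega_k)=\nabla f(\omega_k)\mathds{1}_{\{|\omega_k|\le m^{1/12}\}}$.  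By the cubic-growth bound on $\nabla f$ this makes $|\widehat{\nabla f}(\omega_k)|\lesssim m^{1/4}$, so the conditionally Gaussian martingale increments satisfy Bernstein's condition with $\epsilon_m=m^{-1/4}$ (and the Dedecker--Merlev\`ede concentration gives $\delta_m=m^{-1/4}$), while the sub-Gaussian moment bound of Lemma \ref{lem:expmoment} makes the truncation tail $\sum_k\PP(|\omega_k|>m^{1/12})\lesssim m\exp(-cm^{1/6})$, negligible against $1-\Phi(x)$ on the full admissible $x$-range.  Your $\sqrt{\ln m}$ truncation cannot reproduce either the rate $m^{-1/4}$ in the Bernstein constant or the stretched-exponential smallness of the truncation tail, so the $m^{-1/4}\ln m$ and $x^3 m^{-1/4}$ terms in the stated bound would not come out.

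Finally, your explanation of the threshold $m\asymp\eta^{-13/8}\delta^{-9/8}$ as the point where two error bounds coincide is a plausible guess but is not how it arises: in the paper it comes from which branch of the remainder tail estimate in Lemma \ref{lem:R} is active, specifically from comparing $c_{m,\eta}=c(\eta^{1/2}\delta^{-1/2}\vee m^{1/2}\eta\delta)$ with $m^{-5/6}\eta^{-7/6}\delta^{-1/2}$ inside the proof, which after chasing exponents yields exactly $m\gtrless\eta^{-13/8}\delta^{-9/8}$.
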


Based on Theorem \ref{thm-self-normal}, we also derive the Berry-Esseen bound for the SGLD algorithm. 

\begin{theorem}\label{coro:berry}
Under the assumption of Theorem \ref{thm-self-normal}, we have
\begin{eqnarray*}
	\sup_{x\in\R}|\PP(\mathcal{W}_\eta\le x)-\Phi(x)|\le Cm^{-\frac14}\ln m.
\end{eqnarray*}	
Further assume $m=C\eta^{-2}/|\ln\eta|$, we have
\begin{eqnarray*}
	\sup_{x\in\R}|\PP(\mathcal{W}_\eta\le x)-\Phi(x)|\le C\eta^{\frac12}|\ln \eta|^{5/4}.
\end{eqnarray*}	
\end{theorem}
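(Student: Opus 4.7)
The plan is to deduce the Berry--Esseen bound from the Cram\'er-type moderate deviation (CMD) result of Theorem \ref{thm-self-normal} via the classical truncation argument. By the ``moreover'' clause in Theorem \ref{thm-self-normal}, both tails of $\W_\eta$ satisfy the same relative-error bound, so starting from
\[
\sup_{x\in\R}|\PP(\W_\eta\le x)-\Phi(x)| \le \sup_{x\ge 0}|\PP(\W_\eta>x)-(1-\Phi(x))| + \sup_{x\ge 0}|\PP(-\W_\eta>x)-(1-\Phi(x))|,
\]
it suffices to control the upper tail; the lower tail is handled identically.

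For the upper tail, I would pick a threshold $x_0=c\sqrt{\ln m}$ that stays well inside the moderate deviation range of Theorem \ref{thm-self-normal} as $\eta\to 0$ and $m\to\infty$. In the range $0\le x\le x_0$, I would write
\[
|\PP(\W_\eta>x)-(1-\Phi(x))| = (1-\Phi(x))\Big|\frac{\PP(\W_\eta>x)}{1-\Phi(x)}-1\Big|,
\]
apply the CMD bound to the ratio, and absorb every polynomial prefactor $x^k$ using the elementary Mill-type inequality $(1-\Phi(x))(1+x^k)\le C_k$ valid for all $x\ge 0$ and $k\in\N$. This converts the polynomial-in-$x$ errors of Theorem \ref{thm-self-normal} into constant-in-$x$ contributions, whose dominant surviving term is $Cm^{-1/4}\ln m$. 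For $x>x_0$, I would bound
\[
|\PP(\W_\eta>x)-(1-\Phi(x))|\le \PP(\W_\eta>x_0) + (1-\Phi(x_0)),
\]
apply the CMD once more at $x_0$ to conclude $\PP(\W_\eta>x_0)\le 2(1-\Phi(x_0))$ for $\eta$ small, and note that $(1-\Phi(x_0))\le e^{-c^2(\ln m)/2}=m^{-c^2/2}$ is negligible compared to $m^{-1/4}\ln m$ once $c$ is taken sufficiently large. Combining the two regions gives the first bound. The second assertion then follows by substituting $m=C\eta^{-2}/|\ln\eta|$ and computing $m^{-1/4}\ln m\asymp \eta^{1/2}|\ln\eta|^{1/4}\cdot|\ln\eta| = \eta^{1/2}|\ln\eta|^{5/4}$.

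The principal technical obstacle is verifying that the ``non-polynomial-in-$x$'' auxiliary error terms of Theorem \ref{thm-self-normal}---namely $\eta^{1/2}\delta^{-1/2}$ (which is what survives after $(1-\Phi(x))$ absorbs the $x^6$ factor in case (i)), and $(m\eta\delta)^{-1/2}$ together with $\sqrt m\,\eta\delta$ in case (ii)---are in fact dominated by $Cm^{-1/4}\ln m$ within the regimes $m\le\eta^{-13/8}\delta^{-9/8}$ and $m>\eta^{-13/8}\delta^{-9/8}$ respectively. This needs a careful case analysis exploiting the defining boundary of the two regimes together with the standing hypotheses $m=o(\eta^{-2})$ and $m\eta\to\infty$; the boundary choice $m\asymp\eta^{-2}/|\ln\eta|$ sits firmly in regime (ii) with $m\eta\delta\to\infty$, which is precisely where the target rate $m^{-1/4}\ln m$ can be matched, so all the balancing is consistent and the stated bounds are optimal for this approach.
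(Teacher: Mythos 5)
Your plan follows the same truncation-plus-Mill's-ratio strategy the paper uses: split at a threshold, apply the CMD of Theorem~\ref{thm-self-normal} together with the elementary bound $(1-\Phi(x))x^k\le C_k$ on the central region, and on the outer region use the CMD once at the threshold plus the Gaussian tail. The only substantive difference is the threshold itself: you take $x_0=c\sqrt{\ln m}$, while the paper takes $C_{m,\eta}=\eta^{-1/24}\delta^{1/24}$ in case (i) and $C_{m,\eta}=(m\eta\delta)^{1/12}\wedge(\sqrt m\eta\delta)^{-1/2}$ in case (ii), i.e.\ the square root of the cutoff for the CMD's admissible $x$-range, which automatically keeps the threshold strictly inside that range. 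Your choice does not have this property for free: at $m\asymp\eta^{-2}/|\ln\eta|$ in case (ii) one has $(\sqrt m\eta\delta)^{-1}\asymp\delta^{-1}|\ln\eta|^{1/2}$, which is of the \emph{same} order as $\sqrt{\ln m}$, so $x_0=c\sqrt{\ln m}$ sits at the boundary of the admissible range of Theorem~\ref{thm-self-normal}(ii) rather than being $o\big((\sqrt m\eta\delta)^{-1}\big)$; you would need to pick $c$ small relative to $\delta^{-1}$, whereas the paper's geometric-mean threshold sidesteps this. The remaining bookkeeping you explicitly flag as the ``principal technical obstacle''---showing that $\eta^{1/2}\delta^{-1/2}$, $(m\eta\delta)^{-1/2}$, and $\sqrt m\eta\delta$ are dominated by $Cm^{-1/4}\ln m$ in the two regimes---is precisely the step the paper itself leaves implicit (``we can similarly show'' in case (ii)), so your proposal is at the same level of completeness as the paper's own argument; that verification should be written out rather than asserted.
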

By choosing a specific connection between the step size $\eta$ and the number of iterations $m$, we can achieve the bound $\sqrt \eta|\ln\eta|^{5/4}$ which is close to the convergence rate in Theorem \ref{lem: pipieta}, up to a logarithmic correction. This result also has the same order as that in \cite{fan2024normalized} up to a logarithmic correction, though their result is under stronger conditions.

\begin{remark}
The assumption $\omega_0\sim\pi_\eta$ in Theorems \ref{thm-self-normal} and \ref{coro:berry} is not essential. Due to the exponential ergodicity of the SGLD algorithm, one can extend it to the case in which $\omega_0$ is subgaussian distributed. The advantage of taking $\omega_0\sim\pi_\eta$ is that in the calculation, the terms describing the difference between the distribution of $\omega_k$ and $\pi_\eta$ will vanish, while in the general case, one has to use exponential ergodicity of $\omega_k$ to bound the difference. Since $\omega_k$ converges to $\pi_\eta$ exponentially fast, the difference will not put an essential difficulty. For the ease of calculation, we only considered the case of $\omega_0\sim\pi_\eta$.
\end{remark}

\section{Auxiliary lemmas for the proof}\label{sec:3}

The strategy for proving our main result is to decompose $\sqrt{m\eta/\delta}(\Pi_\eta(h)-\pi(h))$ into a martingale term and a remainder term as in \eqref{e:Decompose}, and show that the remainder is negligible and that the martingale satisfies SNCMD. In this section, we give the decomposition and some auxiliary lemmas needed for the proof.

\begin{lemma}\label{lem:1} 
	Under Assumption \ref{assu1}, we have $\nabla P$ and $Q_{\eta,\delta}$ are Lipschitz and satisfies the dissipative condition, that is,  for any $x,y\in \R^d$,
		\begin{eqnarray}\label{e:plip}
		|\nabla P(x)-\nabla P(y)|\le L|x-y|,
	\end{eqnarray}
	\begin{eqnarray}\label{e:pdissi}
		\Ll x-y,-\nabla P(x)+\nabla P(y)\Rr\le -K_1|x-y|^2+K_2,
	\end{eqnarray}
	\begin{eqnarray}\label{e:qlip}
	\|Q_{\eta,\delta}(x)-Q_{\eta,\delta}(y)\|\le C\sqrt \eta |x-y|.
\end{eqnarray}
We also have that $\nabla \psi$ and $\nabla P$ have linear growth, that is, 
		\begin{eqnarray}\label{e:psibound1}
		|\nabla\psi(x,\zeta)|\le L|x|+|\nabla\psi(0,\zeta)|,
	\end{eqnarray} 
	\begin{eqnarray}\label{e:pbound1}
		|\nabla P(x)|\le L|x|+|\nabla P(0)|.
	\end{eqnarray}
\end{lemma}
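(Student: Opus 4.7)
The lemma collects five routine consequences of Assumption \ref{assu1} (plus Assumption \ref{assu2} for the diffusion coefficient bound). The plan is to dispatch them in three groups; the first two groups are essentially immediate, and only the Lipschitz estimate on $Q_{\eta,\delta}$ requires real work.

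For the linear growth bounds \eqref{e:psibound1} and \eqref{e:pbound1}, I would apply \eqref{e:lip} with $y=0$ together with the triangle inequality, using that $\nabla P(x)=\E_\zeta[\nabla\psi(x,\zeta)]$ (differentiation under the integral being justified by the uniform Lipschitz bound \eqref{e:lip}). The Lipschitz and dissipative bounds \eqref{e:plip} and \eqref{e:pdissi} for $\nabla P$ then follow by taking $\E_\zeta[\cdot]$ of \eqref{e:lip} and \eqref{e:dissi}, invoking Jensen's inequality for the norm estimate and linearity of expectation for the inner product.

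The bulk of the work is \eqref{e:qlip}, where $Q_{\eta,\delta}=(\eta\Sigma(\cdot)+\delta I_d)^{1/2}$; I would proceed in two steps. \textbf{Step (a):} show that $\Sigma$ is globally Lipschitz on $\R^d$. Expanding
\[
\nabla\psi(x,\zeta)\nabla\psi(x,\zeta)^\top-\nabla\psi(y,\zeta)\nabla\psi(y,\zeta)^\top=(\nabla\psi(x,\zeta)-\nabla\psi(y,\zeta))\nabla\psi(x,\zeta)^\top+\nabla\psi(y,\zeta)(\nabla\psi(x,\zeta)-\nabla\psi(y,\zeta))^\top,
\]
taking Hilbert--Schmidt norm and using \eqref{e:lip} pulls out a factor $L|x-y|$ multiplied by $|\nabla\psi(x,\zeta)|+|\nabla\psi(y,\zeta)|$. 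Assumption \ref{assu2} forces $\E|\nabla\psi(x,\zeta)|^2\le C$ uniformly in $x$ (since $\E\exp(K_\zeta|\nabla\psi|^2)\le C$ yields a uniform second moment), so taking expectation gives a bound $\le C|x-y|$. The $\nabla P\nabla P^\top$ piece is handled analogously using \eqref{e:plip} together with the uniform bound $|\nabla P(x)|\le\E|\nabla\psi(x,\zeta)|$; hence $\|\Sigma(x)-\Sigma(y)\|_{\text{HS}}\le C|x-y|$. \textbf{Step (b):} set $M=Q_{\eta,\delta}(x)-Q_{\eta,\delta}(y)$ and observe that
\[
\eta\bigl(\Sigma(x)-\Sigma(y)\bigr)=Q_{\eta,\delta}(x)^2-Q_{\eta,\delta}(y)^2=Q_{\eta,\delta}(x)\,M+M\,Q_{\eta,\delta}(y).
\]
Since $Q_{\eta,\delta}(x)$ and $Q_{\eta,\delta}(y)$ are symmetric and bounded below by $\sqrt{\delta}\,I_d$, pairing with $M$ in the Hilbert--Schmidt inner product yields $\langle M,\,Q_{\eta,\delta}(x)M+MQ_{\eta,\delta}(y)\rangle_{\text{HS}}\ge 2\sqrt{\delta}\,\|M\|_{\text{HS}}^2$, so by Cauchy--Schwarz $\|M\|_{\text{HS}}\le\frac{\eta}{2\sqrt{\delta}}\|\Sigma(x)-\Sigma(y)\|_{\text{HS}}\le\frac{C\eta}{\sqrt{\delta}}|x-y|$. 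Absorbing the $\delta$-dependent constant (per the paper's convention that $C$ may depend on fixed problem parameters) and using $\eta\le\sqrt{\eta}$ for small $\eta$ then gives \eqref{e:qlip}.

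The hardest part is step (b): the matrix square root is not Lipschitz in general, and only the uniform lower bound $Q_{\eta,\delta}^2\succeq\delta I_d$ rescues the estimate. Without this bound one would only get H\"older-$1/2$ regularity via Powers--St\o rmer, which would be insufficient for the downstream Stein's-method analysis. A secondary subtlety is that the $L^2$ bound needed to control $\nabla\psi\nabla\psi^\top$ in step (a) is the \emph{only} place Assumption \ref{assu2} enters this lemma; if one only had the Lipschitz and dissipative bounds, $\Sigma$ need not be globally Lipschitz.
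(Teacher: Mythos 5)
Your treatment of \eqref{e:plip}, \eqref{e:pdissi}, \eqref{e:psibound1}, and \eqref{e:pbound1} matches the paper: take expectation over $\zeta$ of \eqref{e:lip}, \eqref{e:dissi}, and apply \eqref{e:lip} with $y=0$ together with the triangle inequality. These parts are fine.

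For \eqref{e:qlip}, step (a) contains a genuine gap: $\Sigma$ is \emph{not} globally Lipschitz under Assumptions \ref{assu1}--\ref{assu2}. The claim that $\E|\nabla\psi(x,\zeta)|^2\le C$ uniformly in $x$ is incompatible with the dissipativity condition. Indeed, \eqref{e:dissi} with $y=0$ forces
\[
-|x|\,|\nabla\psi(x,\zeta)|\;\le\;\langle x,-\nabla\psi(x,\zeta)\rangle\;\le\;-K_1|x|^2+K_2+|x|\,|\nabla\psi(0,\zeta)|,
\]
so $|\nabla\psi(x,\zeta)|\gtrsim K_1|x|$ for large $|x|$, and hence $\E|\nabla\psi(x,\zeta)|^2\gtrsim K_1^2|x|^2$. (If one instead reads Assumption \ref{assu2} as giving a uniform-in-$x$ sub-Gaussian bound, the two hypotheses are contradictory and no $\psi$ can satisfy them; the paper in fact only ever invokes sub-Gaussianity of $\nabla\psi(0,\zeta)$.) With the consistent reading, your expansion yields only $\|\Sigma(x)-\Sigma(y)\|_{\mathrm{HS}}\le C(1+|x|+|y|)\,|x-y|$, a local rather than global Lipschitz bound. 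A concrete obstruction: in dimension one, $\psi(x,\zeta)=\tfrac12\zeta x^2$ with $\zeta$ bounded and bounded away from zero satisfies Assumptions \ref{assu1}--\ref{assu2}, yet $\Sigma(x)=\mathrm{Var}(\zeta)\,x^2$ is quadratic. So the intermediate claim feeding step (b) is simply false, not merely unproved.

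The paper avoids this by never differencing $\Sigma$. It works instead with the random \emph{vector} $V_{\eta,\delta}(x,\zeta,\xi)=\sqrt\eta\,\nabla P(x)-\sqrt\eta\,\nabla\psi(x,\zeta)+\sqrt\delta\,\xi$, which is $2\sqrt\eta L$-Lipschitz in $x$ for every realization of $(\zeta,\xi)$, and transfers this through the square-root map and the $L^2(\Omega)$ structure to $Q_{\eta,\delta}=(\E[V V^\top])^{1/2}$. That route produces the $\sqrt\eta$ scaling directly and is $\delta$-free; your Sylvester-type step (b), even with a Lipschitz $\Sigma$, would only give a constant $\sim\eta/\sqrt\delta$, which is a materially weaker bound since $\delta$ is a live parameter in Theorems \ref{thm-self-normal} and \ref{coro:berry}. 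Step (b) is algebraically correct in isolation; the approach fails because its input is unavailable and its output is quantitatively too weak.
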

\begin{proof}
	The proof will be given in Appendix \ref{appen:lem1}.	
\end{proof}

\begin{lem}  \label{lem:ergodic}
	Under Assumption \ref{assu1}, $(\omega_k)_{k\ge0}$ and the SDE (\ref{e:sde1}) are both exponential ergodic with invariant measures $\pi_\eta$ and $\pi$ respectively.
\end{lem}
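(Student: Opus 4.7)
The plan is to establish exponential ergodicity of both processes via synchronous coupling, exploiting the dissipative structure of Assumption \ref{assu1}.

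For the SDE \eqref{e:sde1}, I would take two solutions $X_t$ and $Y_t$ driven by the same Brownian motion $B_t$ and apply It\^{o}'s formula to $|X_t-Y_t|^2$, which gives
\begin{equation*}
d|X_t-Y_t|^2 = 2\langle X_t-Y_t,\,-\nabla P(X_t)+\nabla P(Y_t)\rangle\,dt + \|Q_{\eta,\delta}(X_t)-Q_{\eta,\delta}(Y_t)\|_{\text{HS}}^2\,dt + dM_t,
\end{equation*}
where $M_t$ is a local martingale. Bounding the drift contribution by \eqref{e:pdissi} and the diffusion contribution by \eqref{e:qlip}, the drift of the semi-martingale satisfies $\frac{d}{dt}\E|X_t-Y_t|^2 \le -(2K_1-C\eta)\E|X_t-Y_t|^2+2K_2$. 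For $\eta$ small enough, Gr\"onwall's inequality yields exponential contraction of $\E|X_t-Y_t|^2$ up to a bounded equilibrium level, which together with a standard Lyapunov/Harris argument (tightness plus a minorization on a compact set) delivers the existence of a unique invariant measure $\pi$ and exponential ergodicity.

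For the discrete SGLD chain, I would perform the analogous coupling: let $\omega_k$ and $\tilde\omega_k$ evolve from different initial conditions but with identical noise streams $(\zeta_k,\xi_k)$. Expanding $|\omega_k-\tilde\omega_k|^2$ from \eqref{e:sgld} and applying the dissipativity bound \eqref{e:dissi} to the inner-product term and the Lipschitz bound \eqref{e:lip} to the squared-gradient term produces the one-step recursion
\begin{equation*}
\E|\omega_k-\tilde\omega_k|^2 \le (1-2\eta K_1+\eta^2 L^2)\,\E|\omega_{k-1}-\tilde\omega_{k-1}|^2 + 2\eta K_2.
\end{equation*}
For $\eta<2K_1/L^2$ the multiplicative factor lies strictly in $(0,1)$, so iterating yields geometric contraction of the coupled pair. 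Existence and uniqueness of the invariant measure $\pi_\eta$ and exponential ergodicity of $(\omega_k)_{k\ge 0}$ then follow from the classical Meyn--Tweedie framework.

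The main technical wrinkle is ensuring that the $Q_{\eta,\delta}$ contribution does not destroy the contraction in the continuous-time case. This is precisely where \eqref{e:qlip} is decisive: since $Q_{\eta,\delta}$ is a square root of $\eta\Sigma+\delta I_d$, its Lipschitz constant scales like $\sqrt\eta$, so the squared Hilbert--Schmidt term contributes only $O(\eta)$ to the coupling rate and is absorbed into the strict dissipativity constant $K_1$ once $\eta$ is sufficiently small. Otherwise the proof is a routine application of dissipative coupling theory; indeed, analogous ergodicity statements for SGLD appear already in \cite{raginsky2017non}, so the result could equally well be obtained by citation.
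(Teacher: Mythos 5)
Your proposal and the paper's proof both ultimately hinge on the Harris/Meyn--Tweedie machinery, but the route is genuinely different and the coupling step you put in front of it does not quite earn its keep, for the following reason. Assumption \ref{assu1} only gives \emph{weak} dissipativity ($K_2 \ge 0$, not $K_2 = 0$), so the synchronous coupling estimate
\[
\frac{\dif}{\dif t}\,\E|X_t-Y_t|^2 \;\le\; -(2K_1-C\eta)\,\E|X_t-Y_t|^2 + 2K_2
\]
only contracts the coupled pair into an $L^2$-ball of radius of order $\sqrt{K_2/K_1}$, not to zero. This does not give Wasserstein contraction of the transition semigroup, nor uniqueness of the invariant measure, nor exponential ergodicity. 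You acknowledge this by appending ``together with a standard Lyapunov/Harris argument,'' but once you invoke that, the coupling estimate is no longer doing any work: the Harris theorem needs (a) a one-trajectory Lyapunov drift bound and (b) a minorization on a level set of the Lyapunov function, neither of which is supplied by the coupling estimate, which only controls the \emph{difference} of two trajectories.

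What the paper does is exactly the clause you relegate to ``standard,'' spelled out. It takes $V(x)=|x|^2+1$, computes (using \eqref{e:pbound2} and \eqref{e:sbound})
\[
\mathcal L V(x) \;\le\; -K_1|x|^2 + 4\eta L^2|x|^2 + C,
\]
so for $\eta \le K_1/(8L^2)$ one gets the geometric drift condition $\mathcal L V \le -\tfrac{K_1}{4}V + b\,\mathds 1_{\mathcal C}$ on a compact $\mathcal C$, and then cites \cite[Theorem 6.1]{MeTw93}. For the discrete chain it performs the analogous one-step drift computation $\E_k[V(\omega_{k+1})]\le(1+2\eta^2L^2-\eta K_1)|\omega_k|^2+1+C\eta$ and cites \cite[Theorem 2.1]{Tweedie94} (which packages the drift-plus-minorization argument). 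If you want to keep a coupling flavour, the way to make it decisive under weak dissipativity would be reflection coupling (\`a la Eberle) together with the Lyapunov bound; synchronous coupling alone cannot close the gap created by $K_2>0$.
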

\begin{proof}
	The proof will be given in Appendix \ref{appen:ergodic}.	
\end{proof}

\begin{lemma}\label{lem:stein}
	Let $h\in \mathrm{Lip}_1(\R^d,\R)$. A solution to Stein's equation  
	\begin{eqnarray*}
		h-\pi(h)=\mathcal L f
	\end{eqnarray*}
	is given by
	\begin{eqnarray}\label{e:steinsol}
		f(x)=-\int_0^\infty \E[h(X_t(x))-\pi(h)]\dif t,
	\end{eqnarray}
	where $X_t(x)$ is the solution of equation \eqref{e:sde1} with initial value $x$. Moreover, there exists a positive constant $C$ such that
	\begin{eqnarray}
		|f(x)|&\le& C(1+|x|^2),\label{e:regularity1}\\
		|\nabla f(x)|&\le& C(1+|x|^3),\label{e:regularity2}\\
		\|\nabla ^2 f(x)\|&\le& C(1+|x|^4),\label{e:regularity3}\\
		\sup_{y:|y-x|\le1}\frac{\|\nabla ^2 f(x)-\nabla ^2 f(y)\|}{|x-y|}&\le& C(1+|x|^{5}).\label{e:regularity4}
	\end{eqnarray}
\end{lemma}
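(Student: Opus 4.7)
The proof has two parts: first showing that $f$ defined by \eqref{e:steinsol} is well-defined and does solve Stein's equation \eqref{e:stein}, and second establishing the four polynomial growth bounds. For well-posedness, the key observation is that invariance of $\pi$ gives
\begin{equation*}
\E[h(X_t(x))] - \pi(h) = \int_{\R^d} \E[h(X_t(x)) - h(X_t(y))]\,\pi(\dif y),
\end{equation*}
which, since $h$ is Lipschitz, is bounded by $\int \E|X_t(x) - X_t(y)|\,\pi(\dif y)$. A synchronous coupling argument applied to $|X_t(x) - X_t(y)|^2$ via It\^o's formula, using the dissipativity \eqref{e:pdissi} and the $\sqrt{\eta}$-Lipschitz estimate \eqref{e:qlip}, yields $\E|X_t(x) - X_t(y)|^2 \le Ce^{-\kappa t}|x-y|^2$ for sufficiently small $\eta$. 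Since $\pi$ has polynomial moments (from the dissipativity condition), the integral in \eqref{e:steinsol} converges absolutely and \eqref{e:regularity1} follows; with a bit of extra bookkeeping one in fact gets $|f(x)| \le C(1+|x|)$, which trivially implies the stated quadratic bound. The identity $\mathcal{L}f = h - \pi(h)$ is then obtained by applying Dynkin's formula to the semigroup $P_t h(x) = \E[h(X_t(x))]$ and sending $t \to \infty$.

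For the derivative bounds I would differentiate the semigroup $P_t h$ in $x$. Since $h$ is only Lipschitz and not smooth, direct differentiation under the expectation is not available; instead I would use the Bismut--Elworthy--Li (BEL) integration-by-parts formula, which is available here because $Q_{\eta,\delta}(x) \succeq \sqrt{\delta}\,I_d$ makes the diffusion uniformly elliptic. For the first variation $J_t(x) = \nabla_x X_t(x)$, which satisfies a linear SDE with coefficients controlled by the Lipschitz bounds on $\nabla P$ and $Q_{\eta,\delta}$, dissipativity gives $\E\|J_t(x)\|^p \le Ce^{-cpt}$, while $\E|X_t(x)|^p \le C(1+|x|^p)$. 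Substituting these estimates into the BEL representation of $\nabla P_t h(x)$ yields $|\nabla P_t h(x)| \le Ce^{-ct}(1+|x|^3)$ for $t \ge 1$, with an integrable $t^{-1/2}$ singularity for $t \le 1$, and integration over $t$ produces \eqref{e:regularity2}. The Hessian bound \eqref{e:regularity3} is obtained by a second application of BEL together with the second variation $K_t(x) = \nabla_x^2 X_t(x)$, which raises the polynomial power in $|x|$ by one.

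The main obstacle is the modulus-of-continuity estimate \eqref{e:regularity4}. The idea is to bound $\|\nabla^2 P_t h(x) - \nabla^2 P_t h(y)\|$ for $|x-y| \le 1$ by a simultaneous coupling of $X_t(x)$, $X_t(y)$ and their first and second variations driven by the same Brownian motion. Via the double BEL representation, the difference of Hessians can be rewritten as an expectation involving $J_t(x) - J_t(y)$ and $K_t(x) - K_t(y)$; a Gr\"onwall argument based on \eqref{e:plip}--\eqref{e:qlip} and the moment bounds yields $L^p$ estimates of the form $Ce^{-ct}(1+|x|^{O(1)})|x-y|$, and integration in $t$ delivers \eqref{e:regularity4} with the stated exponent $5$. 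The subtle point is that Assumption \ref{assu1} only supplies Lipschitz $\nabla \psi$, so the map $x \mapsto X_t(x)$ is twice differentiable only in a Malliavin/distributional sense and a pointwise third-derivative calculation is unavailable; the BEL transfer together with the coupling of variations is precisely what compensates for this regularity deficit.
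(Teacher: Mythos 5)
Your proposal takes a probabilistic (Bismut--Elworthy--Li) route that is genuinely different from the paper's, but it has two substantive gaps and the paper's elliptic-PDE route is chosen precisely to avoid both.

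First, the contraction estimate $\E|X_t(x)-X_t(y)|^2\le Ce^{-\kappa t}|x-y|^2$ that you use for well-posedness (and implicitly for the bound on $f$) does not follow from Assumption \ref{assu1}. The dissipativity \eqref{e:pdissi} has an additive constant $K_2\ge 0$: with a synchronous coupling and It\^o's formula one only obtains
\begin{equation*}
\frac{\dif}{\dif t}\E|X_t(x)-X_t(y)|^2 \le -(2K_1-C\eta)\E|X_t(x)-X_t(y)|^2 + 2K_2,
\end{equation*}
so the coupling distance decays to $\sim K_2/K_1$, not to zero, and $\int_0^\infty \E|X_t(x)-X_t(y)|\,\dif t$ need not converge. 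The paper instead passes through the Meyn--Tweedie geometric ergodicity $\sup_{|h|\le V}|\E h(X_t^x)-\pi(h)|\le CV(x)e^{-ct}$ with $V(x)=1+|x|^2$ (see \eqref{e:ergod1}), which holds because of Lyapunov drift plus the uniform nondegeneracy $Q_{\eta,\delta}\succeq\sqrt\delta I_d$; since $h$ is Lipschitz one has $|h(\cdot)-h(0)|\le V$, and integration in $t$ gives \eqref{e:regularity1} directly (and a linear bound on $f$ would indeed be false in general here, so your ``with a bit of extra bookkeeping'' remark should be dropped). The same issue infects your claim $\E\|J_t(x)\|^p\le Ce^{-cpt}$: the linearized flow $J_t$ decays exponentially only if $\nabla P$ is pointwise monotone (effectively $K_2=0$); under the relaxed dissipativity of Assumption \ref{assu1}, $J_t$ is in general only boundedly growing, and the exponential decay in $t$ has to come from $P_t h-\pi(h)$ via ergodicity, not from the variation process.

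Second, the BEL machinery (and especially a double BEL/second-variation computation controlling the $C^{1,1}$-modulus of $\nabla^2 P_t h$) asks for more regularity of the diffusion coefficient than is available: Lemma \ref{lem:1} provides only that $Q_{\eta,\delta}$ is Lipschitz. You acknowledge the regularity deficit at the end, but it is not a cosmetic issue — the second-variation process $K_t$ and a pointwise Gr\"onwall argument are simply unavailable without differentiating $Q_{\eta,\delta}$ and $\nabla P$. The paper sidesteps both obstacles by viewing $\mathcal L f = h-\pi(h)$ as a uniformly elliptic PDE with Lipschitz (i.e.\ $C^{0,1}$) coefficients and invoking the interior Schauder estimate \cite[Corollary 6.3]{gilbarg2001elliptic} on shrinking balls $B_{r(y)}(y)$ with $r(y)\sim (1+|y|)^{-1}$. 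Because the interior estimate carries weights $\bar d$, $\bar d^2$, $\bar d^{2+\alpha}$ and $\bar d\sim(1+|y|)^{-1}$, each successive derivative picks up one extra power of $(1+|x|)$, giving exactly the progression $|x|^2 \to |x|^3 \to |x|^4 \to |x|^5$ in \eqref{e:regularity1}--\eqref{e:regularity4} with no differentiability of the coefficients and no flow-derivative moment bounds. If you want to push through the probabilistic route, you would need to (i) replace the $J_t$-decay claim by an ergodicity-driven time-splitting for the semigroup, and (ii) either assume $C^{2}$-regularity of the coefficients or work with a regularized diffusion and pass to the limit — both of which make the PDE route the cleaner choice here.
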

\begin{proof}
	The proof will be given in Section \ref{appen:regularity}.	
\end{proof}

Now we introduce the decomposition. By Stein's equation (\ref{e:stein}),
\begin{align*}
	\Pi_\eta(h)-\pi(h)&=\frac{1}{m}\sum_{k=0}^{m-1}\left(h(\omega_k )-\pi(h)\right)\\
	&=\frac1{m\eta}\sum_{k=0}^{m-1}\left[\mathcal{L}f(\omega_k )\eta-\left(f(\omega_{k+1} )-f(\omega_k )\right)\right]+\frac1{m\eta}\sum_{k=0}^{m-1}\left(f(\omega_{k+1} )-f(\omega_k )\right)\\
	&=\frac1{m\eta}[f(\omega_m )-f(\omega_0 )]+\frac1{m\eta}\sum_{k=0}^{m-1}\left[\mathcal{L}f(\omega_k )\eta-(f(\omega_{k+1} )-f(\omega_k ))\right].
\end{align*}
Equations (\ref{e:sgld}), (\ref{e:generator}) and the Taylor expansion yield that
\begin{align*}
	&\mathcal{L}f(\omega_k )\eta-(f(\omega_{k+1} )-f(\omega_k ))\\
	=& \frac\eta2\Ll \nabla^2f(\omega_k ), \eta\Sigma(\omega_k)+\delta I_d\Rr_\mathrm{HS}
	-\Ll \nabla f(\omega_k),\eta\nabla P(\omega_k)-\eta\nabla\psi(\omega_k,\zeta_{k+1})\Rr\\
	&-\sqrt{\eta\delta}\Ll \nabla f(\omega_k ),\xi_{k+1}\Rr
	-\int_0^1\int_0^1s\Ll\nabla^2 f(\omega_k+ss'\Delta\omega_k), \Delta \omega_k\Delta \omega_k^\top\Rr_\mathrm{HS}\dif s'\dif s,
\end{align*}
where $\Delta\omega_k=-\eta \nabla \psi(\omega_{k},\zeta_{k+1})+\sqrt{\eta\gamma} \xi_{k+1}$. Thus we have the decomposition as follows,
\begin{equation}  \label{e:Decompose}
	\frac{\sqrt{m\eta}}{\sqrt\delta}(\Pi_\eta(h)-\pi(h))= \mathcal{H}_\eta+\mathcal{R}_\eta,
\end{equation}
where, as we shall see below, $\mathcal{H}_\eta$ is a martingale and $\mathcal{R}_\eta$ is a remainder, given by
\begin{align*}
	\mathcal{H}_\eta = -\frac{1}{\sqrt{m}}\sum_{k=0}^{m-1}\Ll\nabla f(\omega_k ),\xi_{k+1}\Rr,\quad \mathcal{R}_\eta=-\sum_{i=1}^4\mathcal{R}_{\eta,i},
\end{align*}
with
\begin{align*}
	\mathcal{R}_{\eta,1}=&\frac1{\sqrt{m\eta\delta}}(f(\omega_0)-f(\omega_m )),\\
	\mathcal{R}_{\eta,2}=&\frac{\sqrt\eta}{\sqrt m\delta}\sum_{k=0}^{m-1}\Ll \nabla f(\omega_k),\nabla P(\omega_k)-\nabla\psi(\omega_k,\zeta_{k+1})\Rr,\\
	\mathcal{R}_{\eta,3}=&\frac{1}{\sqrt{m\eta\delta}}\sum_{k=0}^{m-1}
	\int_0^1\int_0^1s \Ll \nabla^2 f(\omega_k+rr'\Delta\omega_k)-\nabla^2f(\omega_k ), \Delta \omega_k\Delta \omega_k^\top\Rr_\mathrm{HS}\dif s'\dif s,\\
	\mathcal{R}_{\eta,4}=& \frac{1}{2\sqrt{m\eta\delta}}\sum_{k=0}^{m-1}\Ll \nabla^2f(\omega_k ), \eta^2\Sigma(\omega_k)+\eta\delta I_d-\Delta\omega_k\Delta\omega_k^\top\Rr_\mathrm{HS}
	\big\}.
\end{align*}

The estimation of $\mathcal H_\eta$ and $\mathcal R_\eta$ depends on the following two lemmas.
\begin{lemma}\label{lem:R}
Suppose that Assumptions \ref{assu1} and \ref{assu2} hold. Let 
$h\in \mathrm{Lip}_1(\R^d,\R)$ and $f:\R^d\to\R$ be the solution
of \eqref{e:stein}. The following inequality holds, 
\begin{eqnarray*}
\PP(|\mathcal{R}_\eta|>y)\le C\Big( e^{-cy\eta^{\frac12}\delta^\frac12m^{\frac12} }
+e^{-cy^\frac25\delta^\frac15 \eta^{-\frac1{5}}}
+e^{-cy^{\frac29}\eta^{-\frac29}\delta^{-\frac29} }
+e^{-cy^{\frac27}\delta^{\frac17}\eta^{-\frac37}}\Big),
\end{eqnarray*}
for any $y$ satisfying $c_{m,\eta}\le y\le C\eta^{-\frac72}\delta^{-\frac72}$, where $c_{m,\eta}=c(\eta^{\frac12}\delta^{-\frac12}\vee m^{\frac12}\eta\delta)$.
\end{lemma}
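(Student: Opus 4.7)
The plan is to bound $\PP(|\mathcal R_{\eta,i}|>y/4)$ separately for each $i\in\{1,2,3,4\}$ and combine by a union bound, so that each of the four $\mathcal R_{\eta,i}$ contributes one of the four exponential terms. The two common ingredients are the polynomial-in-$|x|$ regularity bounds \eqref{e:regularity1}--\eqref{e:regularity4} of Lemma~\ref{lem:stein}, and the sub-Gaussian concentration of $|\omega_k|$, which follows from Assumption~\ref{assu2}, the dissipative bound \eqref{e:pdissi} and the exponential ergodicity in Lemma~\ref{lem:ergodic}; in particular, under $\omega_0\sim\pi_\eta$ one has $\PP(\max_{0\le k\le m}|\omega_k|\ge R)\le Cm\,e^{-cR^2}$ for $R\ge c_0$.

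First, $\mathcal R_{\eta,1}=(f(\omega_0)-f(\omega_m))/\sqrt{m\eta\delta}$ is handled directly by the quadratic bound \eqref{e:regularity1}: $|\mathcal R_{\eta,1}|\le C(1+|\omega_0|^2+|\omega_m|^2)/\sqrt{m\eta\delta}$, and sub-Gaussianity of $\omega_0,\omega_m$ gives the first exponential $\exp(-cy\sqrt{m\eta\delta})$.

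The remaining three terms share a common truncation strategy: on the event $A_R=\{\max_{0\le k\le m}|\omega_k|\le R\}$ I apply a concentration inequality whose strength depends on the polynomial power of $R$ appearing in the relevant estimates; on $A_R^c$ I use the sub-Gaussian tail $Cme^{-cR^2}$; then I optimise $R=R(y)$. For $\mathcal R_{\eta,2}$, which is a martingale difference sum in $\mathcal F_k=\sigma(\zeta_i,\xi_i:i\le k)$, a Freedman-type Bernstein inequality applies, using $|\nabla f|\le C(1+|x|^3)$ and the sub-Gaussian estimate on $\nabla\psi(\cdot,\zeta)-\nabla P(\cdot)$ from Assumption~\ref{assu2}; on $A_R$ the conditional quadratic variation is $O((\eta/\delta)(1+R^6))$ and the per-step jump is $O(\sqrt{\eta/(m\delta)}\,(1+R^3))$ up to sub-Gaussian factors. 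For $\mathcal R_{\eta,3}$, I would split the inner integrand according to $\{|\Delta\omega_k|\le 1\}$ and its complement: on the former the local Lipschitz bound \eqref{e:regularity4} gives $C(1+|\omega_k|^5)|\Delta\omega_k|^3/\sqrt{m\eta\delta}$, while on the latter the global estimate \eqref{e:regularity3} is used; since $|\Delta\omega_k|^3\lesssim \eta^3|\nabla\psi(\omega_k,\zeta_{k+1})|^3+(\eta\delta)^{3/2}|\xi_{k+1}|^3$ is sub-Weibull, Bernstein applies to the resulting sum after truncation. For $\mathcal R_{\eta,4}$, the matrix $\eta^2\Sigma(\omega_k)+\eta\delta I_d-\Delta\omega_k\Delta\omega_k^\top$ has conditional mean $-\eta^2\nabla P(\omega_k)\nabla P(\omega_k)^\top$, which yields a deterministic-looking drift controlled via \eqref{e:regularity3} and \eqref{e:pbound1}, plus a martingale-difference part handled again by Bernstein using $\|\nabla^2 f\|\le C(1+|x|^4)$ on $A_R$.

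The main obstacle is exactly the polynomial-in-$|\omega_k|$ growth of the derivatives of $f$ from Lemma~\ref{lem:stein}, which blocks a direct application of standard concentration inequalities and forces the truncation argument. The four distinct fractional powers of $y$ appearing in the exponentials correspond to four different polynomial growth exponents ($|x|^2$ for $\mathcal R_{\eta,1}$, $|x|^3$ for $\mathcal R_{\eta,2}$, $|x|^5$ coupled with $|\Delta\omega_k|^3$ for $\mathcal R_{\eta,3}$, and $|x|^4$ coupled with $|\Delta\omega_k|^2$ for $\mathcal R_{\eta,4}$); optimising $R$ as an appropriate power of $y$ to balance the concentration bound on $A_R$ against $Cme^{-cR^2}$ produces precisely these exponents, and the constraint $y\le C\eta^{-7/2}\delta^{-7/2}$ is what ensures the optimised $R$ stays in the regime where the sub-Gaussian tail is sharp.
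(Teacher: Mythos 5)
Your proposal follows essentially the same route as the paper: a union bound over the four pieces $\mathcal{R}_{\eta,i}$, a direct exponential-moment/Markov bound for $\mathcal{R}_{\eta,1}$ via \eqref{e:regularity1}, truncation of $|\omega_k|$ (and of $|\Delta\omega_k|$ for $\mathcal{R}_{\eta,3}$) combined with a Bernstein-type exponential-Markov argument for the martingale pieces, and the same splitting of $\mathcal{R}_{\eta,4}$ into its conditional mean $-\eta^2\nabla P\nabla P^\top$ plus martingale differences, with the truncation level optimized against the sub-Gaussian tail of $\max_k|\omega_k|$. The only differences are presentational: the paper packages the truncation-plus-exponential-Markov step as a reusable lemma (Lemma \ref{lem:RC}, applied with $\alpha=0,1,2$), and your bookkeeping of the polynomial growth exponents is slightly off in places (e.g.\ the quadratic variation for $\mathcal{R}_{\eta,2}$ carries $(1+R^3)^2(1+R)^2$, not $(1+R^6)$, since $\nabla\psi-\nabla P$ grows linearly on the truncation event), which affects the precise fractional powers of $y$ but not the validity of the strategy.
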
	
\begin{proof}
The proof will be given in Section \ref{appen:3}.	
\end{proof}

\begin{lemma}{\cite[Lemma 3.5]{fan2024normalized}}\label{lem:fan}
Let $\left(\beta_i, \mathcal{F}_i\right)_{i=1, \ldots, m}$ be a finite sequence of martingale differences. Assume that the following conditions hold:
\begin{itemize}
	\item[(A1)] There exists a number $\epsilon_m \in(0, \frac{1}{2}]$ such that
	$$
	\left|\E[\beta_i^k | \mathcal{F}_{i-1}]\right| \leq \frac{1}{2} k!\epsilon_m^{k-2} \E[\beta_i^2|\mathcal{F}_{i-1}], \quad \text { for all } k \geq 2 \text { and } 1 \leq i \leq m ;
	$$
	\item[(A2)] There exist a number $\delta_m \in(0, \frac{1}{2}]$ and a positive constant $C$ such that for all $x>0$,
	$$
	\PP\big(\big|\sum_{i=1}^m\E[\beta_i^2|\mathcal{F}_{i-1}]-\E[\beta_i^2]\big| \geq x\big) \leq C \exp \left\{-x^2 \delta_m^{-2}\right\}.
	$$
\end{itemize}
Then the following inequality holds for all $0 \leq x=o\left(\min \left\{\epsilon_m^{-1}, \delta_m^{-1}\right\}\right)$,
\begin{eqnarray*}
&&\Big|\ln \frac{\PP\big(\sum_{i=1}^m\beta_i / \sqrt{\sum_{i=1}^m\E[\beta_i^2|\mathcal{F}_{i-1}]}\ge x\big)}{1-\Phi(x)}\Big| \\
&\leq& C\left(x^3\left(\epsilon_m+\delta_m\right)+(1+x)\left(\delta_m\left|\ln \delta_m\right|+\epsilon_m\left|\ln \epsilon_m\right|\right)\right) .	
\end{eqnarray*}
\end{lemma}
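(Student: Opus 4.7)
The plan is to apply the classical conjugate (exponentially tilted) measure method for Cram\'er-type moderate deviations, adapted to the martingale setting. Set $S_m=\sum_{i=1}^m\beta_i$, $V_m^2=\sum_{i=1}^m \E[\beta_i^2|\mathcal F_{i-1}]$, and $B_m^2=\E[V_m^2]$; the goal is to compare $\PP(S_m\ge xV_m)$ with $1-\Phi(x)$. Condition (A1) is a Bernstein--Cram\'er bound on conditional moments, which guarantees that the conditional mgfs $\phi_i(\lambda):=\E[e^{\lambda\beta_i}|\mathcal F_{i-1}]$ are finite for $|\lambda|\le c/\epsilon_m$ and admit the expansion
\begin{equation*}
\log\phi_i(\lambda)=\tfrac{\lambda^2}{2}\E[\beta_i^2|\mathcal F_{i-1}]+O\bigl(|\lambda|^3\epsilon_m\,\E[\beta_i^2|\mathcal F_{i-1}]\bigr).
\end{equation*}
First I would form the exponential martingale $T_i(\lambda)=\prod_{j\le i}\phi_j(\lambda)^{-1}e^{\lambda\beta_j}$ and tilt via $dQ_\lambda/d\PP=T_m(\lambda)$, choosing the critical parameter $\lambda=x/B_m$.

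Next, writing
\begin{equation*}
\PP(S_m\ge xV_m)=\E^{Q_\lambda}\!\bigl[\exp\bigl(-\lambda S_m+\textstyle\sum_{j=1}^m \log\phi_j(\lambda)\bigr)\,1_{\{S_m\ge xV_m\}}\bigr]
\end{equation*}
and substituting the Cram\'er expansion produces an integrand proportional to $\exp(-\lambda(S_m-\lambda V_m^2)-\tfrac{\lambda^2}{2}V_m^2+O(\lambda^3\epsilon_m V_m^2))$. Under $Q_\lambda$ the shifted sum $S_m-\lambda V_m^2$ is a centred martingale (a Girsanov-type identity gives $\E^{Q_\lambda}[\beta_i|\mathcal F_{i-1}]\approx\lambda\E[\beta_i^2|\mathcal F_{i-1}]$) whose conditional variance is close to $V_m^2$; a martingale Berry--Esseen bound of Bolthausen/Heyde--Brown type applied under $Q_\lambda$ then gives that $(S_m-\lambda V_m^2)/V_m$ is approximately $N(0,1)$ with Kolmogorov error $C\epsilon_m$. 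Localising to the high-probability event $\{|V_m^2-B_m^2|\le B_m^2\delta_m\sqrt{|\ln\delta_m|}\}$, whose complement has probability $\le C\delta_m$ by (A2), controls the random normaliser and contributes the additional relative errors $\exp(O(x^3\delta_m))$ and $\exp(O(x\delta_m\sqrt{|\ln\delta_m|}))$. Collecting these three contributions and repeating for the left tail gives the stated logarithmic bound.

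The main obstacle is the simultaneous handling of the random self-normaliser $V_m$ and the tilted martingale CLT: the event $\{S_m\ge xV_m\}$ is not a half-line in $S_m$, so the analysis must be carried out on a localising set where $V_m^2\approx B_m^2$, and the fluctuation threshold has to be balanced against the sub-Gaussian tail in (A2). This balancing is precisely what generates the logarithmic corrections $|\ln\delta_m|$ and $|\ln\epsilon_m|$ in the final estimate. A secondary technicality is ensuring that the martingale Berry--Esseen approximation under $Q_\lambda$ remains sharp uniformly in $\lambda$ throughout the moderate-deviation range $x=o(\min\{\epsilon_m^{-1},\delta_m^{-1}\})$, so that the cubic term in the Cram\'er expansion cancels exactly against the drift shift produced by the tilt and only the sub-leading factor $O(x^3\epsilon_m)$ survives in the relative error.
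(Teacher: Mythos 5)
This statement is not proved in the paper at all: it is quoted verbatim as \cite[Lemma 3.5]{fan2024normalized} and used as a black box, so there is no in-paper argument to compare against. Your sketch is nevertheless the standard route to results of this type and is essentially the strategy of the cited source and its antecedents (Grama--Haeusler; Fan--Grama--Liu): form the multiplicative martingale $T_m(\lambda)=\prod_j\phi_j(\lambda)^{-1}e^{\lambda\beta_j}$, tilt to the conjugate measure at the critical $\lambda\asymp x$, expand the cumulant via the Bernstein-type condition (A1), prove a Berry--Esseen bound for the recentred martingale under $Q_\lambda$, and use (A2) to localise the quadratic characteristic $V_m^2$ so that the self-normalised event $\{S_m\ge xV_m\}$ can be replaced by a half-line in $S_m$ at the cost of the $\delta_m$-terms. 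The outline correctly identifies the two delicate points (the random normaliser and the uniformity of the tilted CLT in $\lambda$).

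Two small corrections to your accounting of the error terms. First, the factor $\epsilon_m|\ln\epsilon_m|$ does not come from balancing the localisation threshold against (A2); it is the intrinsic rate of the martingale Berry--Esseen bound for the conjugate martingale under Bernstein's condition (the Bolthausen-type argument yields $O(\epsilon_m|\ln\epsilon_m|)$, not the $O(\epsilon_m)$ you claim), while only the $\delta_m|\ln\delta_m|$ factor arises from optimising the truncation level for $|V_m^2-B_m^2|$ against the sub-Gaussian tail in (A2). Second, the cubic term in the cumulant expansion does not ``cancel exactly against the drift shift'': the drift shift absorbs the quadratic term $\tfrac{\lambda^2}{2}V_m^2$, and the cubic remainder $O(\lambda^3\epsilon_m V_m^2)$ simply survives as the $x^3\epsilon_m$ contribution to the logarithmic error, which is consistent with your final conclusion but not with the cancellation mechanism you describe. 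Neither point invalidates the plan; a full write-up would in any case defer to the cited reference.
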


\section{Proof of Main result}\label{sec:5}

In this section, we present the proof of our main result. The proof of Theorem \ref{lem: pipieta} is based on the Stein method as developed in \cite[Theorem 2.5]{FSX19}. For Theorems \ref{thm-self-normal} and \ref{coro:berry}, we analyze the normalized Cramér-type moderate deviations for martingales and demonstrate that the remainder term $\mathcal{R}_\eta$ is negligible. See \cite{fan2022cram1} and \cite{fan2024normalized}  for more details.

\begin{proof}[Proof of Theorem \ref{lem: pipieta}]
Let $(\omega_k)_{k\ge0}$ be the Markov Chain with initial value $\omega_0\sim\pi_\eta$. The Taylor expansion implies that
\begin{eqnarray}\label{e:lem3.1.1}
0&=&\E[f(\omega_1)-f(\omega_0)]\nonumber\\
&=&\E\big[\Ll \nabla f(\omega_0), \Delta\omega_0 \Rr
+\int_0^1\int_0^1 s\Ll \nabla^2f\big(\omega_0+rr'\Delta\omega_0\big),\Delta\omega_0\Delta\omega_0^\top    \Rr_{\text {HS}}\dif s\dif s'\big]\nonumber\\
&=&\E[\Ll \nabla f(\omega_0), \Delta\omega_0 \Rr]+\frac12\E[\Ll \nabla^2f (\omega_0),\Delta\omega_0\Delta\omega_0^\top     \Rr_{\text {HS}}]\nonumber\\
&&+\E\big[\int_0^1\int_0^1 s\Ll \nabla^2f\big(\omega_0+ss'\Delta\omega_0)\big)-\nabla^2f(\omega_0),\Delta\omega_0\Delta\omega_0^\top    \Rr_{\text {HS}}\dif s\dif s'\big],
\end{eqnarray}
where $\Delta \omega_0=\omega_1-\omega_0$. Following \eqref{e:sgld} and \eqref{e:sgld1}, one can get
\begin{eqnarray*}
	\E[\Ll \nabla f(\omega_0), \Delta \omega_0 \Rr]&=&\E[\Ll \nabla f(\omega_0), \E_0[\Delta \omega_0] \Rr]\\
	&=&\E[\Ll \nabla f(\omega_0),-\eta\nabla P(\omega_{0})\Rr],
\end{eqnarray*}
and
\begin{eqnarray*}
	\E[\Ll \nabla^2f (\omega_0),\Delta \omega_0\Delta \omega_0^\top     \Rr_{\text {HS}}]
	&=&\E[\Ll \nabla^2f (\omega_0),\E_0[\Delta \omega_0\Delta \omega_0^\top]     \Rr_{\text {HS}}]\\
	&=&\E[\Ll \nabla^2f (\omega_0), \eta^2\nabla P(\omega_{0})\nabla P(\omega_{0})^\top+\eta^2\Sigma(\omega_0)+\delta\eta I_d    \Rr_{\text {HS}}].
\end{eqnarray*}
Recall the generator of $(X_t)_{t\ge0}$, 
\begin{eqnarray*}
\mathcal L f(\omega_0)=\Ll -\nabla P(\omega_0), \nabla f(\omega_0) \Rr+\frac1 2\Ll \eta\Sigma(\omega_0)+\delta I_d,\nabla^2f(\omega_0) \Rr_{\text{HS}}.
\end{eqnarray*}
Combining equalities above with \eqref{e:stein}, for any Lipschitz test function $h$, we obtain
\begin{eqnarray}\label{e:lem3.1.2}
\E[h(\omega_0)-\mu(h)]&=&\E[\mathcal L f(\omega_0)]\nonumber\\
&=& -\frac1\eta \E\big[\int_0^1\int_0^1 s\Ll \nabla^2f\big(\omega_0+ss'\Delta\omega_0)\big)-\nabla^2f(\omega_0),\Delta\omega_0\Delta\omega_0^\top     \Rr_{\text {HS}}\dif s\dif s'\big]\\
&&-\frac12\E[\Ll  \nabla^2f(\omega_0),\eta\nabla P(\omega_0)\nabla P(\omega_0)^\top
 \Rr_{\text{HS}}].\nonumber
\end{eqnarray}

For the integration term of \eqref{e:lem3.1.2}, one has
\begin{eqnarray*}
&&\E\big[\int_0^1\int_0^1 s\Ll \nabla^2f\big(\omega_0+ss'\Delta\omega_0)\big)-\nabla^2f(\omega_0),\Delta\omega_0\Delta\omega_0^\top    \Rr_{\text {HS}}\dif s\dif s'\big]\\
&=&\E\big[\int_0^1\int_0^1 s\Ll \nabla^2f\big(\omega_0+ss'\Delta\omega_0)\big)-\nabla^2f(\omega_0),\Delta\omega_0\Delta\omega_0^\top     \Rr_{\text {HS}}\dif s\dif s'1_{\{|\Delta\omega_0|\le1\}}\big]\\
&&+\E\big[\int_0^1\int_0^1 s\Ll \nabla^2f\big(\omega_0+ss'\Delta\omega_0)\big)-\nabla^2f(\omega_0),\Delta\omega_0\Delta\omega_0^\top     \Rr_{\text {HS}}\dif s\dif s'1_{\{|\Delta\omega_0|>1\}}\big].
\end{eqnarray*}
For the first term above, \eqref{e:regularity4} implies that
\begin{eqnarray*}
&& \big|\E\big[\int_0^1\int_0^1 s\Ll \nabla^2f\big(\omega_0+ss'\Delta\omega_0)\big)-\nabla^2f(\omega_0),\Delta\omega_0\Delta\omega_0^\top     \Rr_{\text {HS}}\dif s\dif s'1_{\{|\Delta\omega_0|\le1\}}\big]\big|\\
&\le&\E\big[\int_0^1\int_0^1 s' s^2 \frac{|\nabla^2f\big(\omega_0+ss'\Delta\omega_0)\big)-\nabla^2f(\omega_0)|}{|ss'\Delta\omega_0|}|\Delta\omega_0|^3 \dif s\dif s'1_{\{|\Delta\omega_0|\le1\}}\big]\\
&\le& C\E\big[(1+|\omega_0+\Delta\omega_0|^5)|\Delta\omega_0|^3 1_{\{|\Delta\omega_0|\le1\}}\big]\\
&\le& C\big(\E\big[(1+|\omega_0+\Delta\omega_0|^5)\ 1_{\{|\Delta\omega_0|\le1\}}\big]^2\big)^{1/2} \big(\E|\Delta\omega_0|^6\big)^{1/2}
\le C\eta^{\frac32}.
\end{eqnarray*}
For the second term, \eqref{e:regularity3} yields
\begin{eqnarray*}
&&\big|\E\big[\int_0^1\int_0^1 s\Ll \nabla^2f\big(\omega_0+ss'\Delta\omega_0)\big)-\nabla^2f(\omega_0),\Delta\omega_0\Delta\omega_0^\top     \Rr_{\text {HS}}\dif s\dif s' 1_{\{|\Delta\omega_0|>1\}}\big]\big|\\
&\le&C\E\big[(1+|\omega_0|^4+|\Delta\omega_0|^4)|\Delta\omega_0|^21_{\{|\Delta\omega_0|>1\}}\big]\\
&\le&C\E\big[(1+|\omega_0|^4+|\Delta\omega_0|^4)^2|\Delta\omega_0|^4\big]\E1_{\{|\Delta\omega_0|>1\}}.
\end{eqnarray*}
By the Markov inequality and \eqref{e:sgld}, we can get
\begin{eqnarray*}
\E1_{\{|\Delta\omega_0|>1\}}=\PP(|\Delta\omega_0|>1)\le \E|\Delta\omega_0|^4\le C\eta^2.
\end{eqnarray*}
Since $\E\big[(1+|\omega_0|^4+|\Delta\omega_0|^4)^2|\Delta\omega_0|^4\big]$ is bounded, we obtain
\begin{eqnarray}\label{e:lem3.1.3}
	\E\big[\int_0^1\int_0^1 s\Ll \nabla^2f\big(\omega_0+ss'\Delta\omega_0)\big)-\nabla^2f(\omega_0),\Delta\omega_0\Delta\omega_0^\top    \Rr_{\text {HS}}\dif s\dif s'\big]
	\le C\eta^{\frac32}.
\end{eqnarray}
For the second term of \eqref{e:lem3.1.2}, similar with the estimation of \eqref{e:lem3.1.3}, \eqref{e:regularity3} implies
\begin{eqnarray}\label{e:lem3.1.4}
\E[\Ll  \nabla^2f(\omega_0),\eta\nabla P(\omega_0)\nabla P(\omega_0)^\top
\Rr_{\text{HS}}]\le C\eta.
\end{eqnarray}
Combining \eqref{e:lem3.1.2} - \eqref{e:lem3.1.4}, we have
\begin{eqnarray*}
W_1(\pi,\pi_\eta)\le C\eta^{1/2}.
\end{eqnarray*}
\end{proof}

\begin{proof}[Proof of Theorem \ref{thm-self-normal}]
According to the decomposition in \eqref{e:Decompose}, we have
\begin{equation*}
\frac{\sqrt{m\eta}}{\sqrt\delta}(\Pi_\eta(h)-\pi(h))= \mathcal{H}_\eta+\mathcal{R}_\eta.
\end{equation*}
Thus for any $x>0$ and $0<y<x$, we have
\begin{eqnarray}\label{e:thm1}
\PP(\mathcal W_\eta> x)\le \PP(\mathcal H_\eta/\sqrt{\mathcal Y_\eta}>x-y)+\PP(\mathcal R_\eta/\sqrt{\mathcal Y_\eta}>y).
\end{eqnarray}
Recall that
\begin{eqnarray*}
	\mathcal{H}_\eta = -\frac{1}{\sqrt{m}}\sum_{k=0}^{m-1}\Ll\nabla f(\omega_k ),\xi_{k+1}\Rr,\quad
		\Y_\eta=\frac{1}{m}\sum_{k=0}^{m-1} |\nabla f(\omega_k)|^2.
\end{eqnarray*}
We denote
\begin{eqnarray*}
	\widehat{\nabla f}(\omega_k)=\nabla f(\omega_k)1_{\{|\omega_k|\le m^{\frac1{12}}\}}\quad 
	\hat\Y_\eta=\frac{1}{m}\sum_{k=0}^{m-1} |\widehat{\nabla f}(\omega_k)|^2.
\end{eqnarray*}

For the probability $\PP(\mathcal H_\eta/\sqrt{\mathcal Y_\eta}>x-y)$ , we have
\begin{eqnarray}\label{e:thm2}
&&\frac{\PP(\mathcal H_\eta/\sqrt{\mathcal Y_\eta}>x-y )}{1-\Phi(x)} \nonumber\\
&\le&\frac{\PP(\mathcal H_\eta/\sqrt{\mathcal Y_\eta}>x-y, ~|\omega_k|\le m^{\frac1{12}}~\text{for any}~  k \in [0, m-1])}{1-\Phi(x-y)}\frac{1-\Phi(x-y)}{1-\Phi(x)} \\
&& +\ \frac{\sum_{k=0}^{m-1}\PP(|\omega_k|>m^{\frac1{12}})}{1-\Phi(x)}.\nonumber
\end{eqnarray}
For the first term above,
\begin{eqnarray*}
&&\frac{\PP(\mathcal H_\eta/\sqrt{\mathcal Y_\eta}>x-y, ~|\omega_k|\le m^{\frac1{12}}~\text{for any}~ k \in [0, m-1])}{1-\Phi(x-y)}\\
&=& \frac{\PP\big(\frac{1}{\sqrt{m \hat \Y_\eta}}\sum_{k=0}^{m-1}\Ll\widehat{\nabla f}(\omega_k ),\xi_{k+1}\Rr
	>x-y, ~|\omega_k|\le m^{\frac1{12}}~\text{for any}~ k \in [0, m-1]\big)}{1-\Phi(x-y)}\\
&\le & \frac{\PP\big(\frac{1}{\sqrt{m \hat \Y_\eta}}\sum_{k=0}^{m-1}\Ll\widehat{\nabla f}(\omega_k ),\xi_{k+1}\Rr
	>x-y\big)}{1-\Phi(x-y)}.
\end{eqnarray*}
It is easy to see that $\Big(\frac{1}{\sqrt m}\Ll \widehat{\nabla f}(\omega_k),\xi_{k+1}\Rr, \mathcal{F}_{k+1} \Big)_{k\ge0}$ is a sequence of martingale difference and
$\sum_{k=0}^{m-1}\E_k[\frac{1}{m}\Ll \widehat{\nabla f}(\omega_k),\xi_{k+1}  \Rr^2]=\hat\Y_\eta$. As $\xi_{k+1}$ is a normal random variable and satisfies the Bernstein condition, Condition (A1) of Lemma \ref{lem:fan} is satisfied. For (A2),
\begin{eqnarray}\label{e:thm4}
	\PP\big( |\hat\Y_\eta-\E\hat\Y_\eta|\ge x' \big)&=& 	\PP\big( |\sum_{k=0}^{m-1}(|\widehat{\nabla f}(\omega_k)|^2-\E|\widehat{\nabla f}(\omega_k)|^2) |\ge mx' \big)\\
	&\le& 2\exp\{-c\, m^{\frac12}x'^2\},\nonumber
\end{eqnarray}
where the last inequality follows \cite[Theorem 2]{dedecker2015subgaussian}. Thus conditions of Lemma \ref{lem:fan} are satisfied with $\e_m=m^{-\frac1{4}}$ and $\delta_m=m^{-\frac1{4}}$ therein. By Lemma \ref{lem:fan}, we obtain for all $0 \le x=o(m^{\frac1{4}})$,
\begin{eqnarray*}
&&\frac{\PP\big(\frac{1}{\sqrt{m \hat \Y_\eta}}\sum_{k=0}^{m-1}\Ll\widehat{\nabla f}(\omega_k ),\xi_{k+1}\Rr>x-y\big)}{1-\Phi(x-y)}\\
&\le& \exp\{C((x-y)^3m^{-1/4}+(1+x-y)m^{-1/4}\ln m)\}.
\end{eqnarray*}
For the tail of the normal distribution, one has the estimation
\begin{eqnarray*}
\frac{1}{\sqrt{2\pi}(1+x)} e^{-\frac{x^2}{2}}\le 1-\Phi(x)\le \frac{1}{\sqrt{\pi}(1+x)} e^{-\frac{x^2}{2}}, \ \ \ x \ge 0,
\end{eqnarray*}
and
\begin{eqnarray*}
\frac{1-\Phi(x-y)}{1-\Phi(x)} = 1+\frac{\int_{x-y}^x e^{-\frac12s^2}\dif s}{\int_x^\infty e^{-\frac12s^2}\dif s}
\le 1+(1+x)ye^{\frac12x^2-\frac12(x-y)^2}
\le e^{Cxy}.
	\end{eqnarray*}
Thus, for the first term of \eqref{e:thm2}, we obtain
for all $0 \le x=o(m^{\frac1{4}})$,
\begin{eqnarray*}
&&\frac{\PP(\mathcal H_\eta/\sqrt{\mathcal Y_\eta}>x-y, ~|\omega_k|\le m^{\frac1{12}}~\text{for any}~ k \in [0, m-1] )}{1-\Phi(x-y)}\frac{1-\Phi(x-y)}{1-\Phi(x)}\\
&\le&\exp\big\{C\big((x-y)^3m^{-1/4}+(1+x-y)m^{-1/4}\ln m+xy\big)\big\}.
\end{eqnarray*}
For the second term of \eqref{e:thm2}, Lemma \ref{lem:expmoment} and the Markov inequality yield that
for all $0\le x =o(m^{\frac{1}{12}}) ,$
\begin{eqnarray*}
	\frac{\sum_{k=0}^{m-1}\PP(|\omega_k|>m^{\frac1{12}})}{1-\Phi(x)}
	&\le& \sum_{k=0}^{m-1} \sqrt{2\pi}(1+x)\E\exp\{C|\omega_k|^2\}e^{-Cm^{\frac1{6}}+x^2/2}\\
	&\le& C   \exp\{-c(m^{\frac1{6}}-x^2)\}.
\end{eqnarray*}
Combining the above estimation for \eqref{e:thm2}, we obtain for all $0 \le x=o(m^{\frac1{12}})$,
\begin{eqnarray}\label{e:thm5}
	&&\frac{\PP(\mathcal H_\eta/\sqrt{\mathcal Y_\eta}>x-y )}{1-\Phi(x)} \nonumber\\
	&\le& \exp\big\{C\big(x^3m^{-1/4}+(1+x)m^{-1/4}\ln m+xy\big)\big\}+ C \exp\{-c(m^{\frac1{6}}-x^2)\}.
\end{eqnarray}
Now we estimate the remainder term $\mathcal{R}_\eta$,
\begin{eqnarray*}
\PP(\mathcal{R}_\eta/\sqrt{\Y_\eta}\ge y)
&\le& \PP(\mathcal{R}_\eta/\sqrt{\Y_\eta}\ge y, \Y_\eta\ge \E\Y_\eta-\frac12\E\Y_\eta)+\PP(\Y_\eta\le \E\Y_\eta-\frac12\E\Y_\eta)\\
&\le& \PP(\mathcal{R}_\eta\ge y\sqrt{\E\Y_\eta/2})+\PP( \E\Y_\eta-\Y_\eta\ge\frac12\E\Y_\eta).
\end{eqnarray*}
According to Lemma \ref{lem:R}, we have
\begin{eqnarray*}
\PP(\mathcal R_\eta\ge y\sqrt{\E\Y_\eta/2})\le C\Big(e^{-c\eta^{-\frac15}\delta^{\frac15}y^{\frac25}}1_{\{y< m^{-\frac56}\eta^{-\frac76}\delta^{-\frac12}\}}+e^{-cm^{\frac12}\eta^{\frac12}\delta^{\frac12}y}1_{\{y\ge m^{-\frac56}\eta^{-\frac76}\delta^{-\frac12}\}}\Big),
\end{eqnarray*}
for $c_{m,\eta}\le y$. Similar with the calculation of \eqref{e:thm4}, we can get
\begin{eqnarray*}
\PP( \E\Y_\eta-\Y_\eta\ge\E\Y_\eta/2)\le \PP\big( \E\hat\Y_\eta-\hat\Y_\eta\ge \E\Y_\eta/2 \big)+\sum_{k=0}^{m-1}\PP(|\omega_k|\ge m^{\frac1{12}})\le Ce^{-cm^{\frac16}}.
\end{eqnarray*}
This yields
\begin{eqnarray}\label{e:thm6}
& & \nonumber \frac{\PP(\mathcal R_\eta\ge y\sqrt{\E\Y_\eta/2})}{1-\Phi(x)}  \\
& \le&  C\big(\exp\{-c(m^{\frac16}-x^2)\}+\exp\{-c\eta^{-\frac15}\delta^{\frac15}y^{\frac25}+x^2\}1_{\{c_{m,\eta}\le y\le m^{-\frac56}\eta^{-\frac76}\delta^{-\frac12}\}}\\
& &+\exp\{-cm^{\frac12}\eta^{\frac12}\delta^{\frac12}y+x^2\}1_{\{y\ge m^{-\frac56}\eta^{-\frac76}\delta^{-\frac12}\}}\big)\nonumber.
\end{eqnarray}

For the case $m\le\eta^{-\frac{13}8}\delta^{-\frac98}$, combing \eqref{e:thm1}, \eqref{e:thm5} and \eqref{e:thm6} with $y=x^5\eta^{\frac12}\delta^{-\frac12}+\eta^{\frac12}\delta^{-\frac12}|\ln\eta|$, we have that
\begin{eqnarray*}
	\frac{\PP(\mathcal W_\eta> x)}{1-\Phi(x)}
	&\le& \exp\big\{C\big(x^3m^{-\frac14}+(1+x)m^{-\frac14}\ln m+x^6\eta^{\frac12}\delta^{-\frac12}+x\eta^{\frac12}\delta^{-\frac12}|\ln\eta|\big)\big\}\\
	&&+ C\big(\exp\{-c(x^5+|\ln\eta|)^{\frac25}\}1_{\{0\le x<m^{-\frac16}\eta^{-\frac13}\}}+\exp\{-c(m^{\frac16}-x^2)\}\\
	&&\quad\quad+\exp\{-c(x^5\eta m^{\frac12})\}1_{\{x\ge m^{-\frac16}\eta^{-\frac13}\}}\big)\\
	&\le& 1+ C\big(x^3m^{-\frac14}+(1+x)m^{-\frac14}\ln m+x^6\eta^{\frac12}\delta^{-\frac12}\big)
\end{eqnarray*}
holds uniformly for $0\le x =o(\eta^{-\frac1{12}}\delta^{\frac1{12}})$. On the other hand, we can get similarly,
\begin{eqnarray*}
	\frac{\PP(\mathcal W_\eta> x)}{1-\Phi(x)}
	&\ge& 
	 \PP(\mathcal H_\eta/\sqrt{\mathcal Y_\eta}>x+y)-\PP(\mathcal R_\eta/\sqrt{\mathcal Y_\eta}<-y)\\
	&\ge& 1- C\big(x^3m^{-\frac14}+(1+x)m^{-\frac14}\ln m+x^6\eta^{\frac12}\delta^{-\frac12}\big).
\end{eqnarray*}
Thus, we can get
\begin{eqnarray*}
\Big| \frac{\PP(\mathcal W_\eta> x)}{1-\Phi(x)}
	-1 \Big| & \le & 
C\big(x^3m^{-\frac14}+(1+x)m^{-\frac14}\ln m+x^6\eta^{\frac12}\delta^{-\frac12}\big)
\end{eqnarray*}
uniformly for $0\le x=o(\eta^{-\frac1{12}}\delta^{\frac1{12}})$ as $\eta$ tends to zero and $m$ tends to infinity.  

For the case $m>\eta^{-\frac{13}8}\delta^{-\frac98}$, it is easy to verify that $c_{m,\eta}\ge m^{-\frac56}\eta^{-\frac76}\delta^{-\frac12}$. Combing \eqref{e:thm1}, \eqref{e:thm5} and \eqref{e:thm6} with $y=x^2(m\eta\delta)^{-\frac12}+\sqrt m\eta\delta$, we can get
\begin{eqnarray*}
	\frac{\PP(\mathcal W_\eta> x)}{1-\Phi(x)}
	&\le& \exp\big\{C\big(x^3m^{-\frac14}+(1+x)m^{-\frac14}\ln m+xy\big)\big\}\\
	&&+ C\big(\exp\{-cm^{\frac12}\eta^{\frac12}\delta^{\frac12}y+x^2\}+\exp\{-c(m^{\frac16}-x^2)\}\big)\\
	&\le& 1+  C\big(x^3(m\eta\delta)^{-\frac12}+\sqrt m\eta\delta x+ m^{-\frac14}\ln m\big),
\end{eqnarray*}
holds uniformly for $0\le x =o( (m\eta\delta)^{\frac16}\wedge (\sqrt m\eta\delta)^{-1})$. On the other hand, using similar arguments we have,
\begin{eqnarray*}
	\frac{\PP(\mathcal W_\eta> x)}{1-\Phi(x)}
	&\ge& 
	\PP(\mathcal H_\eta/\sqrt{\mathcal Y_\eta}>x+y)-\PP(\mathcal R_\eta/\sqrt{\mathcal Y_\eta}<-y)\\
	&\ge& 1+  C\big(x^3(m\eta\delta)^{-\frac12}+\sqrt m\eta\delta x+ m^{-\frac14}\ln m\big).
\end{eqnarray*}
Thus,
\begin{eqnarray*}
	\Big| \frac{\PP(\mathcal W_\eta> x)}{1-\Phi(x)}
	-1 \Big| & \le & 
	 C\big(x^3(m\eta\delta)^{-\frac12}+\sqrt m\eta\delta x+ m^{-\frac14}\ln m\big)
\end{eqnarray*}
uniformly for $0\le x=o( (m\eta\delta)^{\frac16}\wedge (\sqrt m\eta\delta)^{-1})$ as $\eta$ tends to zero and $m$ tends to infinity. 

\end{proof}

\begin{proof}[Proof of Theorem \ref{coro:berry}]
For the case $m\le\eta^{-\frac{13}8}\delta^{-\frac98}$, denote $C_{m,\eta}=\eta^{{-\frac{1}{24}}}\delta^{\frac{1}{24}} $. It is easy to have the following decomposition,	
\begin{eqnarray*}
	\sup_{x\in\R}|\PP(\mathcal{W}_\eta< x)-\Phi(x)|&\le& 	\sup_{x\le -C_{m,\eta}}|\PP(\mathcal{W}_\eta\le x)-\Phi(x)|+\sup_{-C_{m,\eta}\le x\le 0 }|\PP(\mathcal{W}_\eta\le x)-\Phi(x)|\\
	&&+\sup_{0\le x\le C_{m,\eta}}|\PP(\mathcal{W}_\eta\le x)-\Phi(x)|+\sup_{x> C_{m,\eta}}|\PP(\mathcal{W}_\eta\le x)-\Phi(x)|\\
	&=:& I_1+I_2+I_3+I_4.
\end{eqnarray*}

For $I_1$ and $I_4$,  Theorem \ref{thm-self-normal} implies
\begin{eqnarray*}
I_1&=& \sup_{x\le -C_{m,\eta}}|\PP(\mathcal{W}_\eta\le x)-\Phi(x)|\\
&\le& \sup_{x\le -C_{m,\eta}}\PP(\mathcal{W}_\eta\le x)+\Phi(-c_{\eta,m})\\
&\le& \Phi(-C_{m,\eta})e^C+\Phi(-C_{m,\eta})\le Cm^{-\frac14}\ln m.
\end{eqnarray*}
Similarly,
\begin{eqnarray*}
I_4\le Cm^{-\frac14}\ln m.
\end{eqnarray*}
For $I_2$ and $I_3$, Theorem \ref{thm-self-normal} and the inequality $|e^x-1|\le|x|e^{|x|}$ imply
	\begin{eqnarray*}
	I_2&=&\sup_{-C_{m,\eta}\le x\le 0 }|\PP(\mathcal{W}_\eta\le x)-\Phi(x)|\\
	&\le&\sup_{-C_{m,\eta}\le x\le 0 }C\Phi(x)\big(x^3m^{-\frac14}+(1+x)m^{-\frac14}\ln m+x^6\eta^{\frac12}\delta^{-\frac12}\big)\\
	&\le& Cm^{-\frac14}\ln m.
	\end{eqnarray*}
	Similarly,
\begin{eqnarray*}
		I_3\le Cm^{-\frac14}\ln m.
\end{eqnarray*}
Combining the estimation for the terms $I_1$-$I_4$, we have
\begin{eqnarray*}
	\sup_{x\in\R}|\PP(\mathcal{W}_\eta< x)-\Phi(x)|&\le& 	Cm^{-\frac14}\ln m.
\end{eqnarray*}		

For the case $m>\eta^{-\frac{13}8}\delta^{-\frac98}$, taking $C_{m,\eta}= (m\eta\delta)^{\frac1{12}}\wedge (\sqrt m\eta\delta)^{-\frac12}$ instead of $m^{1/24}$, we can similarly show that
\begin{eqnarray*}
	\sup_{x\in\R}|\PP(\mathcal{W}_\eta< x)-\Phi(x)|&\le& Cm^{-\frac14}\ln m.
\end{eqnarray*}		

Thus, for any $\eta^{-1}<m=o(\eta^{-2})$, we have
\begin{eqnarray*}
	\sup_{x\in\R}|\PP(\mathcal{W}_\eta< x)-\Phi(x)|&\le& Cm^{-\frac14}\ln m.
\end{eqnarray*}		
Further assume $m=C\eta^{-2}/|\ln\eta|$, we can get
\begin{eqnarray*}
	\sup_{x\in\R}|\PP(\mathcal{W}_\eta\le x)-\Phi(x)|
	&\le& C\eta^{\frac12}|\ln \eta|^{5/4}.
\end{eqnarray*}	
\end{proof}	

\section{Proof of Lemma \ref{lem:stein}}\label{appen:regularity}

The proof of Lemma \ref{lem:stein} follows from \cite[Corollary 6.3]{gilbarg2001elliptic}. For ease of reading, their result is given below. Let $\Omega$ be an open subset of $\R^d$, $\alpha\in(0,1]$. For any function defined on $\R^d$, denote
\begin{eqnarray*}
	\|f\|_{0;\Omega}=\sup_{x\in\Omega}\|f(x)\|,
	\quad [f]_{\alpha;\Omega}=\sup_{x,y\in\Omega, x\neq y}\frac{\|f(x)-f(y)\|}{\|x-y\|^\alpha},
	\quad \|f\|_{0, \alpha;\Omega}=\|f\|_{0;\Omega}+[f]_{\alpha;\Omega}.
\end{eqnarray*}
Let $C^{k}(\R^d)$, where $k \ge1$, denote the collection of all $k$-th order continuously differentiable functions on $\R^d$. $C^{k,\alpha}(\R^d)$, with $\alpha\in(0,1]$, refers to the collection of $k$-th order continuously differentiable functions whose $k$-th order partial derivatives are $\alpha$-H\"older continuous. For the case $k=1$, we simplify the notation to $C^{\alpha}(\R^d)$.

\begin{lemma}{\cite[Corollary 6.3]{gilbarg2001elliptic}}\label{lem:gilbarg}
	Let $f\in C^{2,\alpha}(\Omega)$, $h \in C^{\alpha}(\bar{\Omega})$ satisfy $\mathcal{L}f=h$ in a bounded domain $\Omega$ where 
	\begin{equation*}
		\mathcal{L}f(x)=\Ll a(x), \nabla^2 f(x) \Rr_\mathrm{HS}+\Ll b(x),\nabla f(x)\Rr,
	\end{equation*}
	is strictly elliptic and its coefficients are in $ C^{\alpha}(\bar{\Omega})$. Then if $ \Omega' \subset \subset \Omega$ with $\text{dist}(\Omega', \partial \Omega) \geq \bar d$, there is a constant $ C $ such that
	\begin{equation}\label{e:regularity5}
		\bar d \| \nabla f \|_{0; \Omega'} + \bar d^2 \| \nabla^2 f \|_{0; \Omega'} + \bar d^{2+\alpha} [ \nabla^2 f ]_{\alpha; \Omega'} \leq C ( \| f \|_{0; \Omega} + \|h \|_{0, \alpha; \Omega}),
	\end{equation}
	where the positive constant  $C$  depends only on the ellipticity constant and the $ C^{\alpha}(\bar{\Omega}) $ norms of the coefficients of $\mathcal{L}$.
\end{lemma}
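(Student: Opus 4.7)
The statement is the classical interior Schauder estimate for strictly elliptic operators with H\"older continuous coefficients (Gilbarg--Trudinger, Corollary 6.3). My plan is to prove it via the standard three-step program of Schauder theory: a constant-coefficient model estimate, a perturbation/freezing argument for variable coefficients, and a scaling/covering argument that produces the weighted interior bound in terms of $\bar d = \text{dist}(\Omega',\partial\Omega)$.

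\textbf{Step 1 (constant-coefficient model).} First I would establish the sharp interior estimate for the Laplacian on a ball: if $\Delta u = h$ in $B_{2R}$ with $h\in C^\alpha$, then
\[
R\|\nabla u\|_{0;B_R} + R^2\|\nabla^2 u\|_{0;B_R} + R^{2+\alpha}[\nabla^2 u]_{\alpha;B_R} \le C\bigl(\|u\|_{0;B_{2R}} + \|h\|_{0,\alpha;B_{2R}}\bigr).
\]
This follows from the representation $u = N*h + \text{(harmonic correction)}$ where $N$ is the Newtonian potential, together with the Calder\'on--Zygmund / Kellogg-type estimate $[\nabla^2(N*h)]_\alpha \le C[h]_\alpha$ (equivalently, via the Campanato characterization of $C^\alpha$). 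Extending to any symmetric positive definite constant matrix $a_0$ and constant drift $b_0$ is done by a linear change of variables $y = a_0^{-1/2}x$, paying a factor depending on the ellipticity constant only.

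\textbf{Step 2 (freezing coefficients).} For the variable-coefficient operator $\mathcal{L}$, fix $x_0\in\Omega$ and write, for any ball $B_r(x_0)\subset\Omega$,
\[
\mathcal{L}_0 f := \langle a(x_0),\nabla^2 f\rangle_{\mathrm{HS}} + \langle b(x_0),\nabla f\rangle = h + (\mathcal{L}_0 - \mathcal{L})f.
\]
Applying Step~1 to $\mathcal{L}_0$ on $B_r(x_0)$ and using $\|a-a(x_0)\|_{0;B_r}\le [a]_\alpha r^\alpha$, $\|b-b(x_0)\|_{0;B_r}\le [b]_\alpha r^\alpha$, I obtain
\[
r^{2+\alpha}[\nabla^2 f]_{\alpha;B_{r/2}(x_0)} \le C\bigl(\|f\|_{0;B_r(x_0)} + r^{2+\alpha}\|h\|_{0,\alpha;B_r(x_0)}\bigr) + C r^\alpha\bigl(r^2\|\nabla^2 f\|_{0;B_r(x_0)} + r\|\nabla f\|_{0;B_r(x_0)}\bigr),
\]
where the $C$ depends only on the ellipticity constant and the $C^\alpha$-norms of the coefficients. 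Choosing $r$ sufficiently small (depending only on these quantities) makes the factor $Cr^\alpha$ small, which is the crucial ingredient for the absorption argument in Step~3.

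\textbf{Step 3 (weighted interior seminorms and covering).} The cleanest way to upgrade the local estimate to the global interior form stated in \eqref{e:regularity5} is to introduce, following Gilbarg--Trudinger \S 6.1, the weighted norms
\[
[f]^*_{k,\alpha;\Omega} = \sup_{x\in\Omega}\,d_x^{k+\alpha}\,[\nabla^k f]_{\alpha;\{y:|y-x|<d_x/2\}},\qquad d_x = \mathrm{dist}(x,\partial\Omega),
\]
and similarly for $\|\cdot\|^*_{k;\Omega}$. Applying Step~2 with $r\simeq d_{x_0}/2$ at each $x_0$, together with the interpolation inequalities $\|\nabla f\|^*_0, \|\nabla^2 f\|^*_0 \lesssim \varepsilon[\nabla^2 f]^*_\alpha + C_\varepsilon\|f\|_0$ (valid in weighted H\"older spaces), one derives
\[
\|f\|^*_{2,\alpha;\Omega} \le C\bigl(\|f\|_{0;\Omega} + \|h\|^{*(2)}_{0,\alpha;\Omega}\bigr)
\]
after absorbing the lower-order weighted seminorms into the left-hand side. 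Specializing this to $\Omega'\subset\subset\Omega$ and noting $d_x\ge \bar d$ for $x\in\Omega'$ gives exactly \eqref{e:regularity5}.

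\textbf{Main obstacle.} The subtle point is Step~3: the absorption that converts the a priori estimate containing $\|\nabla^2 f\|_{0;B_r}$ and $\|\nabla f\|_{0;B_r}$ on the right into a clean bound requires the scaled interpolation inequality in weighted H\"older spaces together with the fact that the small-factor condition $Cr^\alpha<1/2$ from Step~2 is uniform in $x_0$. Taking $r$ proportional to $d_{x_0}$ rather than a fixed constant is what produces the correct powers of $\bar d$ on the left-hand side of \eqref{e:regularity5}, and this is the step that requires the most bookkeeping; the constant-coefficient and freezing steps are then essentially mechanical.
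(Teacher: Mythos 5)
The paper does not prove this lemma at all—it is quoted verbatim as Corollary 6.3 of Gilbarg--Trudinger, and your outline is precisely the proof given in that reference: the constant-coefficient potential estimate, the coefficient-freezing perturbation, and the passage to weighted interior norms with interpolation and absorption, finally specialized to $\Omega'\subset\subset\Omega$ using $d_x\ge\bar d$. Your sketch is correct and follows the standard (and, here, the cited) route, so there is nothing to reconcile with the paper beyond the citation.
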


\begin{proof}[Proof of Lemma \ref{lem:stein}]
	The existence and the expression of the solution $f$ can be proved similarly as in \cite[Proposition 6.1]{FSX19}. Now we show the regularities of it. According to \eqref{e:steinsol},
	\begin{eqnarray*}
		|f(x)|\le \int_0^\infty |\E[h(X_t^x)]-\pi(h)|\dif t\le \int_0^\infty C(1+|x|^2)e^{-ct}\dif t\le C(1+|x|^2),
	\end{eqnarray*}	
	where the second inequality follows \eqref{e:ergod1}.
	
	For any $x\in\R^d$, define $r(x)=\frac{1}{2(1+|x|)}\in(0,\frac12]$ and 
	$$B_{r(x)}(x)=\{z\in\R^d: |x-z|\le r(x)\}.$$ 
	Consider $\Omega=B_{r(y)}(y)$ and  $\Omega'=B_{r(y)/2}(y)$ for any $y\in \R^d$ in Lemma \ref{lem:gilbarg}. Then we have $$dist(\Omega',\partial\Omega)\ge \frac{r(y)}{2}=\frac{1}{4(1+|y|)}.$$
Therefore, we take $\bar d=\frac{1}{4(1+|y|)}$.
	Taking $\alpha=1$ in Lemma \ref{lem:gilbarg} and considering operator \eqref{e:generator}, 
	\begin{equation*}
		\mathcal L f=\Ll -\nabla P, \nabla f \Rr+\frac1 2\Ll Q_{\eta,\delta},\nabla^2f \Rr_{\text{HS}}. 
	\end{equation*}
	The notation of $Q_{\eta,\delta}$  implies that $\mathcal{L}$ is strictly elliptic, thus \eqref{e:plip} and \eqref{e:qlip} yield that its coefficients are Lipschitz functions in $\bar\Omega$ which satisfy the condition of Lemma \ref{lem:gilbarg}. Then we have
	\begin{eqnarray}
		&&r(y) \| \nabla f \|_{0; \Omega'} \leq C ( \| f \|_{0; \Omega} + \|h \|_{0, 1; \Omega}),\label{e:regularity6}\\
		&&r(y)^2 \| \nabla^2 f \|_{0; \Omega'} \leq C ( \| f \|_{0; \Omega} + \|h \|_{0, 1; \Omega}),\label{e:regularity7}\\ 
		&&r(y)^3 [ \nabla^2 f ]_{1; \Omega'} \leq C ( \| f \|_{0; \Omega} + \|h \|_{0, 1; \Omega}).\label{e:regularity8}
	\end{eqnarray}
	
	For the equality \eqref{e:regularity2}, since 
	$$\int_{\R^d}r(x)\dif x=\infty,
	$$
	for any $0<r_0\le1$, we have 
	$$
	B_{r_0}(x)\subset \bigcup_{y\in B_{r_0(x)}} B_{r_{(y)/2}}(y)=\bigcup_{y\in B_{r_0(x)}} \Omega'.
	$$
	Combining with \eqref{e:regularity6}, we obtain
	\begin{eqnarray*}
		\| \nabla f \|_{0; B_{r_0}(x)}
		\le \sup_{y\in B_{r_0}(x)}  \| \nabla f \|_{0; B_{r(y)/2}(y)}\le \sup_{y\in B_{r_0}(x)} C(1+|y|)( \| f \|_{0; \Omega} + \|h \|_{0, 1; \Omega}).
	\end{eqnarray*}
	\eqref{e:regularity1} implies that
	\begin{eqnarray*}
		\| f \|_{0; \Omega}\le \sup_{z\in B_{r(y)}(y)}|f(z)|\le \sup_{z\in B_{r(y)}(y)}C(1+|z|^2)\le C(1+|y|^2).
	\end{eqnarray*}
	Since $h$ is the Lipschitz function, then we have
	\begin{eqnarray*}
		\| h \|_{0,1; \Omega}\le \sup_{z\in B_{r(y)}(y)}|h(z)|+\sup_{z_1,z_2\in B_{r(y)}(y)}\frac{|h(z_1)-h(z_2)|}{|z_1-z_2|}\le C(1+|y|^2).
	\end{eqnarray*}
	Thus,
	\begin{eqnarray*}
		\| \nabla f \|_{0; B_{r_0}(x)}
		\le \sup_{y\in B_{r_0}(x)} C(1+|y|)( 1+|y^2|)\le C(1+|x|^3),
	\end{eqnarray*}
	which yields
	\begin{eqnarray*}
		| \nabla f(x) | \le C(1+|x|^3).
	\end{eqnarray*}
	
	Similarly, \eqref{e:regularity7} and \eqref{e:regularity8} imply
	\begin{eqnarray*}
		\| \nabla^2 f \|_{0; B_{r_0}(x)} \le C(1+|x|^4),
	\end{eqnarray*}
	\begin{eqnarray*}
		[\nabla^2 f ]_{1; B_{r_0}(x)} \le C(1+|x|^5).
	\end{eqnarray*}
	Thus, we can obtain \eqref{e:regularity3} and \eqref{e:regularity4}.
\end{proof}

\section{Estimation of the remainder $\mcl R_\eta$}\label{appen:3}
We will give in this section several lemmas of $\mcl R_\eta$ which play a crucial role in proving the main results. In order to estimate the tail probability of $\mathcal{R}_\eta$, we need the following four lemmas, the first three lemmas paving the way for proving Lemma \ref{lem:R}.

\begin{lemma}\label{lem:expmoment}
	For small enough $\gamma>0$, one has
	\begin{eqnarray*}
		\E[\exp\{\gamma|\omega_{k}|^2\}\le C,
	\end{eqnarray*}
	for any $k$.
\end{lemma}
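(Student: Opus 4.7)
The plan is to establish a Foster--Lyapunov drift condition of the form $\E_{k-1}[V(\omega_k)] \leq (1-c_3\eta)V(\omega_{k-1}) + C_3\eta$ for the Lyapunov function $V(x) = \exp\{\gamma|x|^2\}$ with $\gamma > 0$ chosen sufficiently small (in terms of $K_1$, $L$, $\delta$, but independently of $\eta$ and $k$). Write $\omega_k = \mu_k + \sqrt{\eta\delta}\,\xi_k$ with $\mu_k = \omega_{k-1} - \eta\nabla\psi(\omega_{k-1}, \zeta_k)$. Expanding $|\mu_k|^2$, applying the dissipativity condition \eqref{e:dissi} with $y=0$, Young's inequality to split the cross term $\Ll\omega_{k-1}, \nabla\psi(0,\zeta_k)\Rr$, and the linear growth bound \eqref{e:psibound1} to absorb the $\eta^2|\nabla\psi|^2$ piece, one obtains, for $\eta$ sufficiently small,
\begin{equation*}
|\mu_k|^2 \leq (1-c_1\eta)|\omega_{k-1}|^2 + C_1\eta\bigl(1 + |\nabla\psi(0,\zeta_k)|^2\bigr).
\end{equation*}

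Since $\xi_k$ is standard Gaussian independent of $(\omega_{k-1},\zeta_k)$, the Gaussian moment generating formula gives, whenever $2\gamma\eta\delta < 1$,
\begin{equation*}
\E\bigl[e^{\gamma|\omega_k|^2}\,\bigm|\,\omega_{k-1},\zeta_k\bigr] = (1-2\gamma\eta\delta)^{-d/2}\exp\!\left\{\frac{\gamma|\mu_k|^2}{1-2\gamma\eta\delta}\right\}.
\end{equation*}
Substituting the bound on $|\mu_k|^2$ and then averaging over $\zeta_k$, Assumption \ref{assu2} with $K_\zeta=1$ together with the concavity estimate $e^{sy}\leq 1+s(e^y-1)$ valid for $s\in(0,1)$, $y\geq 0$ turns the expectation of $\exp\{C_1\gamma\eta|\nabla\psi(0,\zeta_k)|^2/(1-2\gamma\eta\delta)\}$ into a factor $1+O(\eta)$. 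Combined with $(1-2\gamma\eta\delta)^{-d/2}=1+O(\eta)$ and with $\gamma$ chosen so that the coefficient of $|\omega_{k-1}|^2$ is at most $\gamma(1-c_2\eta)$ for some $c_2>0$, this yields
\begin{equation*}
\E_{k-1}[V(\omega_k)] \leq (1+C\eta)\,V(\omega_{k-1})^{1-c_2\eta}.
\end{equation*}

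The drift inequality then follows from the concavity bound $a^p\leq pa+(1-p)$ (valid for $a\geq 0$, $p\in(0,1)$) applied with $a=V(\omega_{k-1})$, $p=1-c_2\eta$, after which the $(1+C\eta)$ prefactor can be absorbed into the constants. Iterating gives $\sup_k\E[V(\omega_k)] \leq \E[V(\omega_0)] + 4C_3/c_3$; to verify $\pi_\eta(V)<\infty$ (so that the bound applies under the standing assumption $\omega_0\sim\pi_\eta$), one runs the chain from the deterministic initial point $\omega_0=0$, applies the same drift along the trajectory to get $\sup_k\E[V(\omega_k)]\leq 1+4C_3/c_3$, and then passes to the invariant measure via Lemma~\ref{lem:ergodic} and Fatou's lemma. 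The main obstacle is book-keeping: every multiplicative correction must be kept of the form $1+O(\eta)$ rather than an $\eta$-independent constant strictly greater than $1$, since otherwise iterating $k$ times would blow the bound up exponentially in $k$. In particular this is what forces $\gamma$ to be taken small enough that the sub-Gaussian moment of $\nabla\psi(0,\zeta)$ contributes only at order $\eta$.
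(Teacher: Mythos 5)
Your argument is correct and tracks the paper's proof closely: both condition on $(\omega_{k-1},\zeta_k)$, integrate out the Gaussian increment exactly (giving the $(1-2\gamma\eta\delta)^{-d/2}$ prefactor and the rescaled exponent), control the drift via dissipativity with $y=0$, Young's inequality and linear growth, and use sub-Gaussianity of $\nabla\psi(0,\zeta)$ so that the off-origin contribution enters only as a $1+O(\eta)$ multiplicative factor. The one substantive difference is the iteration step. The paper keeps the recursion multiplicative, $\E V(\omega_{k+1})\le(1+O(\eta))\,(\E V(\omega_k))^{1-c\eta}$, using Jensen's inequality $\E[X^p]\le(\E X)^p$ on both the $\omega_k$-factor and the $\zeta$-factor, and then bounds the telescoping power product directly. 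You instead linearize pointwise via $e^{sy}\le1+s(e^y-1)$ and $a^p\le pa+(1-p)$ to land on an additive Foster--Lyapunov drift. That conversion is legitimate, but it needs the $(1+C\eta)$ prefactor to satisfy $C<c_2$ so that absorbing it preserves a strict contraction; this does hold because every one of your $1+O(\eta)$ corrections scales linearly in $\gamma$, while $c_2$ is bounded below (roughly $K_1/4$) uniformly over small $\gamma$ --- it is implicit in your ``small enough $\gamma$'' clause, but deserves a sentence. Finally, your explicit check that $\pi_\eta(V)<\infty$ (run the chain from $\omega_0=0$, then pass to $\pi_\eta$ via Lemma~\ref{lem:ergodic} and Fatou) is more careful than the paper, which tacitly assumes $\E e^{\gamma|\omega_0|^2}<\infty$ under $\omega_0\sim\pi_\eta$ and relies on the contracting exponent $(1-c\eta)^k$ to make that a priori quantity disappear.
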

\begin{proof}

For small enough $\gamma>0$ and any constant $k$, \eqref{e:sgld} implies
\begin{eqnarray*}
	\E[\exp\{\gamma|\omega_{k+1}|^2\}
	&=& \E\left[\exp\big\{\gamma(|\omega_k|^2+|\eta \nabla \psi(\omega_k, \zeta_{k+1})|^2+2\Ll\omega_k,-\eta \nabla \psi(\omega_k, \zeta_{k+1})\Rr)\big\}\right. \\
	&& \left.\quad \E_k[\exp \{\eta \delta \gamma|\xi_{k+1}|^2+2\gamma\langle\omega_k-\eta \nabla \psi(\omega_k, \zeta_{k+1}), \sqrt{\eta \delta} \xi_{k+1}\rangle \}|\zeta_{k+1}]\right]
\end{eqnarray*}
A straight calculation to the conditional expectation with respect to the Gaussian random variable $\xi_{k+1}$, we have
\begin{eqnarray*}
	&&\E_k[\exp \{\eta \delta \gamma|\xi_{k+1}|^2+2\gamma\langle\omega_k-\eta \nabla \psi(\omega_k, \zeta_{k+1}), \sqrt{\eta \delta} \xi_{k+1}\rangle \}| \zeta_{k+1}]\\
	&=& \frac{1}{\sqrt{1-2 \eta \delta \gamma}}  \exp \big\{ \frac{2\eta \delta \gamma^2}{1-2 \eta\delta\gamma}\left|\omega_k-\eta \nabla \psi\left(\omega_k, \zeta_{k+1}\right)\right|^2 \big\}\\
	&\le& \frac{1}{\sqrt{1-2 \eta \delta \gamma}}  \exp \big\{ \frac{4\eta \delta \gamma^2}{1-2 \eta\delta\gamma}(|\omega_k|^2+\eta^2 |\nabla \psi\left(\omega_k, \zeta_{k+1}\right)|^2) \big\}.
\end{eqnarray*}
Here $\gamma$ is chosen to be small enough that $1-2\eta\delta\gamma>0$.  Then one has,
\begin{eqnarray*}
	&&\E[\exp\{\gamma|\omega_{k+1}|^2\}\\
	&\le&\frac{1}{\sqrt{1-2\eta\delta\gamma}}\E\Big[\exp \big\{(1+\frac{4 \eta \delta \gamma}{1-2 \eta \delta \gamma}) \gamma|\omega_k|^2+(1+\frac{4 \eta \delta \gamma}{1-2 \eta \delta \gamma}) \gamma \eta^2 |\nabla \psi(\omega_k, \zeta_{k+1})|^2\\
	&&\quad+2 \gamma\Ll \omega_k,-\eta \nabla_k(\omega_k,\zeta_{k+1})\big\}\Big]\\
	&\le& \frac{\exp\{2\gamma\eta K_2\}}{\sqrt{1-2\eta\delta\gamma}}\E\Big[\exp \big\{\big(1+\frac{4 \eta \delta \gamma}{1-2 \eta \delta \gamma}+2(1+\frac{4 \eta \delta \gamma}{1-2 \eta \delta \gamma})\eta^2L^2-K_1\eta\big) \gamma|\omega_k|^2\\
	&&\quad+\big(2\gamma\eta^2(1+\frac{4 \eta \delta \gamma}{1-2 \eta \delta \gamma})+\frac{\gamma\eta}{K_1}\big)|\nabla \psi(0, \zeta_{k+1})|^2\big\}\Big].
\end{eqnarray*}
Since $\omega_k$ and $\zeta_{k+1}$ are independent, and $\nabla\psi(0,\zeta_{k+1})$ is sub-Gaussian, we can choose small enough $\gamma$ such that
\begin{eqnarray*}
	\E[\exp\{\gamma|\omega_{k+1}|^2\}
	&\le& \frac{\exp\{2\gamma\eta K_2\}}{\sqrt{1-2\eta\delta\gamma}}\E\big[\exp \big\{(1-\frac12K_1\eta)\gamma|\omega_k|^2\big\}\big] \\
	&&\times \Big( \E\big[\exp\big\{ (2\gamma\eta(1+\frac{4\eta\delta\gamma}{1-2\eta\delta\gamma})+\frac{\gamma}{k_1})|\nabla\psi(0,\zeta_{k+1})|^2   \big\}\big]  \Big)^\eta\\
	&\le& \frac{C^\eta}{\sqrt{1-2\eta\delta\gamma}}\big(\E\exp \{\gamma|\omega_k|^2\}\big)^{1-\frac12K_1\eta},
\end{eqnarray*}
where the last line follows the H\"older inequality. Inductively, we can get
\begin{eqnarray*}
	\E[\exp\{\gamma|\omega_{k+1}|^2\} &\le& \frac{C^\eta}{\sqrt{1-2\eta\delta\gamma}}\big(\E\exp \{\gamma|\omega_k|^2\}\big)^{1-\frac12K_1\eta}\\
	&\le& \Big( \frac{C^\eta}{\sqrt{1-2\eta\delta\gamma}} \Big)^\frac{c}{\eta}\big(\E\exp\{\gamma|\omega_0|^2\}\big)^{(1-K_1\eta/2)^{k+1}}\le C.
\end{eqnarray*}
Thus the exponential moment of $|\omega_k|^2$ exists for any $k$ and small enough $\gamma$.
\end{proof}

\begin{lemma}\label{lem:RC}
	Let Assumptions \ref{assu1} and \ref{assu2} hold, considering the martingale difference\\ $(\Psi(\omega_k,\theta_{k+1}), \mathcal F_{k+1})_{k\ge0}$ with
	\begin{eqnarray}\label{e:RC}
		\E_k|\Psi(\omega_k,\theta_{k+1})|^i\le 		C^i(1+|\omega_k|^{\alpha i}+i!),
	\end{eqnarray}
	where $\alpha\ge0$ and $i$ is any positive integer. Then for $\sqrt m=o(x)$, we have
	\begin{eqnarray}\label{e:RC1}
		\PP\Big(\sum_{k=0}^{m-1} \Ll \nabla f(\omega_k), \Psi(\omega_k,\theta_{k+1}) \Rr >x\Big)\le C\exp\{c(x^2/m)^{\frac1{4+\alpha}}\}.
	\end{eqnarray}
	Similarly,
	\begin{eqnarray}\label{e:RC2}
		\PP\Big(\sum_{k=0}^{m-1} \Ll \nabla^2 f(\omega_k), \Psi(\omega_k,\theta_{k+1}) \Rr_{\mathrm{HS}} >x\Big)\le C\exp\{c(x^2/m)^{\frac1{5+\alpha}}\}.
	\end{eqnarray}
	Here $f$ is the solution of Stein's equation given in Lemma \ref{lem:stein}.
	
\end{lemma}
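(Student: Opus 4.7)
The plan is to prove both estimates by a truncation argument combined with a Chernoff/Bernstein bound for martingales. Lemma~\ref{lem:stein} provides $|\nabla f(x)|\le C(1+|x|^3)$ and $\|\nabla^2 f(x)\|\le C(1+|x|^4)$, so restricting to the event $\{|\omega_k|\le M\text{ for all } k\le m-1\}$ turns the summands into martingale differences with uniformly controlled Bernstein-type moments. A union bound combined with Lemma~\ref{lem:expmoment} handles the complement, and the truncation radius $M$ is then optimised against the tail.

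For \eqref{e:RC1}, set $\beta_k=\Ll\nabla f(\omega_k),\Psi(\omega_k,\theta_{k+1})\Rr$ and $\bar\beta_k=\beta_k\mathbf{1}_{\{|\omega_k|\le M\}}$; the latter is still a martingale difference since the indicator is $\mathcal F_k$-measurable. Combining $|\nabla f(\omega_k)|\le C(1+M)^3$ on the truncation set with the hypothesis $\E_k|\Psi|^i\le C^i(1+|\omega_k|^{\alpha i}+i!)$, and summing the Taylor series of the conditional MGF separately for the polynomial part and for the factorial part, one obtains, for $|\lambda|\le c(1+M)^{-(3+\alpha)}$,
\[
\E_k\exp\{\lambda\bar\beta_k\}\le \exp\{C\lambda^2(1+M)^{6+2\alpha}\}.
\]
Iterating by the tower property and applying Markov's inequality yields
\[
\PP\Big(\sum_{k=0}^{m-1}\bar\beta_k>x\Big)\le\exp\{-\lambda x+Cm\lambda^2(1+M)^{6+2\alpha}\},
\]
and optimising over admissible $\lambda=x/(2Cm(1+M)^{6+2\alpha})$ gives $\exp\{-cx^2/(m(1+M)^{6+2\alpha})\}$. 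To undo the truncation I use $\PP(\max_{k\le m-1}|\omega_k|>M)\le \sum_k\PP(|\omega_k|>M)\le Cm\exp\{-cM^2\}$, where the last inequality is Chebyshev applied to the sub-Gaussian moment of Lemma~\ref{lem:expmoment}. Choosing $M=c(x^2/m)^{1/(2(4+\alpha))}$ balances the two exponents and yields the claimed rate $\exp\{-c(x^2/m)^{1/(4+\alpha)}\}$; the prefactor $m$ in the truncation tail is absorbed into $C$ thanks to $\sqrt m=o(x)$.

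The proof of \eqref{e:RC2} is verbatim the same with $\nabla f$ replaced by $\nabla^2 f$: the polynomial exponent $3$ becomes $4$, the admissibility constraint becomes $|\lambda|\le c(1+M)^{-(4+\alpha)}$, the conditional MGF bound reads $\exp\{C\lambda^2(1+M)^{8+2\alpha}\}$, and the optimal truncation becomes $M=c(x^2/m)^{1/(2(5+\alpha))}$, producing the exponent $5+\alpha$ in place of $4+\alpha$. The main obstacle lies in the Taylor expansion of the conditional MGF: the hypothesis \eqref{e:RC} carries both a state-dependent polynomial part $|\omega_k|^{\alpha i}$ and a state-independent factorial part $i!$, and closing the exponential bound requires summing these two contributions separately, verifying that the admissibility constraint on $\lambda$ is compatible with both the Chernoff optimum and the chosen $M$, and checking that these constraints are simultaneously satisfied throughout the target range of $x$. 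The hypothesis $\sqrt m=o(x)$ is precisely what places us in the Gaussian branch of the Bernstein inequality rather than in the large-deviation branch, so that the optimisation over $\lambda$ stays inside the admissible region and gives the stated stretched-exponential rate.
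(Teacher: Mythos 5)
Your proof follows essentially the same route as the paper: truncate $\omega_k$ at level $M$, exploit the martingale-difference structure and the moment bound \eqref{e:RC} to control the conditional moment generating function (the paper's bound $1+\frac{(C\lambda y^{3+\alpha})^2}{1-C\lambda y^{3+\alpha}}$ is equivalent to your $\exp\{C\lambda^2(1+M)^{6+2\alpha}\}$ in the admissible range), apply a Chernoff argument, and balance against the union-bound tail from Lemma~\ref{lem:expmoment}, arriving at the same choices $\lambda\propto x/(mM^{6+2\alpha})$ and $M\propto(x^2/m)^{1/(8+2\alpha)}$. One small correction to your closing remark: the hypothesis $\sqrt m=o(x)$ is a lower bound on $x$ whose actual role is to make the truncation level $M$ diverge and the prefactor $m$ from $\sum_k\PP(|\omega_k|>M)\le Cme^{-cM^2}$ negligible, whereas keeping the Chernoff optimizer inside the admissibility region $|\lambda|\lesssim(1+M)^{-(3+\alpha)}$ imposes an \emph{upper} bound on $x$ (roughly $x\lesssim m^{(5+\alpha)/2}$), which is a separate condition to be verified in the applications.
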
	
\begin{proof}
	Denote $\hat\omega_k=\omega_k1_{\{|\omega_k|\le y\}}$ for large enough $y$ to be chosen later, $A=\{|\omega_k|\le y, k=0,1,...,m-1\}$ and $A^C$ as its complement, then we have
	\begin{align}\label{e:eqR1}
		&\PP\Big(\sum_{k=0}^{m-1} \Ll \nabla f(\omega_k), \Psi(\omega_k,\theta_{k+1}) \Rr >x\Big)\nonumber\\
		\le& 		\PP\Big(\sum_{k=0}^{m-1} \Ll \nabla f(\omega_k), \Psi(\omega_k,\theta_{k+1}) \Rr >x, A\Big)+\PP(A^C)\nonumber\\
		\le& 		\PP\Big(\sum_{k=0}^{m-1} \Ll \nabla f(\hat\omega_k), \Psi(\hat\omega_k,\theta_{k+1}) \Rr >x\Big)+\sum_{k=0}^{m-1}\PP\big(|\omega_k|>y\big)\nonumber\\
		\le& e^{-\lambda x}\E\exp\Big\{\sum_{k=0}^{m-1}\lambda\Ll \nabla f(\hat\omega_k),\Psi(\hat\omega_k,\theta_{k+1})\Rr\Big\} +e^{-\gamma y^2}\sum_{k=0}^{m-1}\E\exp\{\gamma|\omega_k|^2\},
	\end{align}
	where the last inequality follows from the Markov inequality, $\lambda$ is a positive constant to be chosen later, and $\gamma$ is a sufficiently small positive constant. For the second term of \eqref{e:eqR1}, Lemma \ref{lem:expmoment} implies
	\begin{eqnarray*}
		e^{-\gamma y^2}\sum_{k=0}^{m-1}\E\exp\{\gamma|\omega_k|^2\}\le mCe^{-\gamma y^2}. 
	\end{eqnarray*}
	For the first term of \eqref{e:eqR1}, it is easy to see that,
	\begin{eqnarray*}
		&&\E\exp\Big\{\sum_{k=0}^{m-1}\lambda\Ll \nabla f(\hat\omega_k),\Psi(\hat\omega_k,\theta_{k+1})\Rr\Big\} \\
		&=&\E\Big[\exp\big\{\sum_{k=0}^{m-2}\lambda\Ll \nabla f(\hat\omega_k),\Psi(\hat\omega_k,\theta_{k+1})\Rr\big\} \E_{m-1}\exp\{\lambda\Ll \nabla f(\hat\omega_{m-1}),\Psi(\hat\omega_{m-1},\theta_{m})\Rr\}\Big].
	\end{eqnarray*}
	Noticing 
	$$\lambda\E_{m-1}\Ll \nabla f(\hat\omega_{m-1}),\Psi(\hat\omega_{m-1},\theta_{m})\Rr=0,$$ 
	by the Taylor expansion to the conditional expectation above, \eqref{e:regularity2}  and \eqref{e:RC} imply
	\begin{eqnarray*}
		&&\E_{m-1}\exp\{\lambda\E_{m-1}\Ll \nabla f(\hat\omega_{m-1}),\Psi(\hat\omega_k,\theta_{m})\Rr\}\\
		&=&1+\sum_{i=2}^{\infty}\frac{\lambda^i}{i!}\E_{m-1}\Ll \nabla f(\hat\omega_{m-1}), 
		\Psi(\hat\omega_k,\theta_{m})\Rr^i\\
		&\le&1+\sum_{i=2}^\infty\frac{(C\lambda)^i}{i!}(1+y^3)^i (1+y^{\alpha i}+i!)\\
		&\le&1+\frac{(C\lambda y^{3+\alpha})^2}{1-C\lambda y^{3+\alpha}},
	\end{eqnarray*}
	if $C\lambda y^{3+\alpha}<1$. By induction, we can get
	\begin{eqnarray*}
		\E\exp\Big\{\sum_{k=0}^{m-1}\lambda\Ll \nabla f(\hat\omega_k),\Psi(\hat\omega_k,\theta_{k+1})\Rr\Big\}\le \Big( 1+\frac{(C\lambda y^{3+\alpha})^2}{1-C\lambda y^{3+\alpha}}\Big)^m.
	\end{eqnarray*}	
	Thus for \eqref{e:eqR1}, we have
	\begin{eqnarray*}
		\PP\Big(\sum_{k=0}^{m-1} \Ll \nabla f(\omega_k), \Psi(\omega_k,\theta_{k+1}) \Rr >x\Big)\le \Big( 1+\frac{(C\lambda y^{3+\alpha})^2}{1-C\lambda y^{3+\alpha}}\Big)^m e^{-\lambda x}   + mCe^{-\gamma y^2}.
	\end{eqnarray*}
	Let $\lambda=\frac{x}{2mC^2y^{6+2\alpha}}$ and $y=(\frac{x^2}{2mC^2})^{\frac{1}{8+2\alpha}}$,
	for large enough $m$ and $\sqrt m=o(x)$, one can get
	\begin{eqnarray*}
		\PP\Big(\sum_{k=0}^{m-1} \Ll \nabla f(\omega_k), \Psi(\omega_k,\theta_{k+1}) \Rr >x\Big)\le  Ce^{-c(x^2/m)^{\frac{1}{4+\alpha}}}.
	\end{eqnarray*}
	
	Similarly, we can show \eqref{e:RC2}. The details are  omitted here.
\end{proof}

\begin{proof}[Proof of Lemma \ref{lem:R} ]
	Recalling the definition of $\mathcal{R}_\eta$, we have
	\begin{eqnarray*}
		\PP(|\mathcal{R}_\eta|>y)&\le&\sum_{i=1}^4\PP(|\mathcal{R}_{\eta,i}|>\frac{y}{4}),
	\end{eqnarray*}
	and shall prove below that the following estimates hold:
	\begin{eqnarray}
		\PP(|\mathcal{R}_{\eta,1}|>y/4)&\le &C e^{-c \sqrt{m\eta\delta}y}, \label{e:R-1} \\
		\PP\left(\left\vert\mathcal{R}_{\eta,2}\right\vert>y/4\right) &\le & Ce^{-c\delta^\frac15 y^\frac25\eta^{-\frac1{5}}}, \label{e:R-2} \\
		\PP\left(\left\vert\mathcal{R}_{\eta,3}\right\vert >y/4\right)& \le & Ce^{-cy^{\frac29}\eta^{-\frac29}\delta^{-\frac29} }, \label{e:R-3}\\
		\PP\left(\left\vert\mathcal{R}_{\eta,4}\right\vert >y/4\right)& \le &	Ce^{-cy^{\frac27}\delta^{\frac17}\eta^{-\frac37}}+Ce^{-cy^{\frac25}\delta^{-\frac15}\eta^{-\frac15}}. \label{e:R-4}
	\end{eqnarray}
	Combining these estimates, we immediately get
	\begin{equation*}
		\PP(|\mathcal{R}_\eta|>y)\le C\Big( e^{-cy\eta^{\frac12}\delta^\frac12m^{\frac12} }
		+e^{-cy^\frac25\delta^\frac15 \eta^{-\frac1{5}}}
		+e^{-cy^{\frac16}\eta^{-\frac5{12}}\delta^{-\frac5{12}} }
		+e^{-cy^{\frac27}\delta^{\frac17}\eta^{-\frac37}}\Big),
	\end{equation*}
	for $c(\eta^{\frac12}\delta^{-\frac12}\vee m^{\frac12}\eta\delta)\le y\le C\eta^{-\frac72}\delta^{-\frac72}$. Now we show \eqref{e:R-1}-\eqref{e:R-4} below. 
	
	\textit{a) Control of $\mathcal{R}_{\eta,1}$.}
	By the Markov inequality and \eqref{e:regularity1},
	\begin{eqnarray*}
		\PP(|\mathcal{R}_{\eta,1}|>y/4)&=&\PP(\gamma|f(\omega_0)-f(\omega_m)|>\gamma \sqrt{m\eta\delta}y/4)\\
		&\le& \E\exp\{C\gamma(1+|\omega_0|^2+|\omega_m|^2)\}e^{-\gamma \sqrt{m\eta\delta}y/4}\\
		&\le& (\E\exp\{2C\gamma|\omega_0|^2\})^{1/2}(\E\exp\{2C\gamma|\omega_m|^2\})^{1/2}e^{-\gamma\sqrt{m\eta\delta}y/4+C\gamma},
	\end{eqnarray*}	
	where $\gamma$ is a positive constant. Lemma \ref{lem:expmoment} implies that the exponential moments of $\omega_0$ and $\omega_m$ are finite for small enough $\gamma$. Thus
	\begin{eqnarray*}
		\PP(|\mathcal{R}_{\eta,1}|>y/4)
		&\le& C e^{-c \sqrt{m\eta\delta}y}.
	\end{eqnarray*}	
	
	\textit{b) Control of $\mathcal{R}_{\eta,2}$.}
	According to the definition of $\mathcal{R}_{\eta,2}$, we have
	\begin{eqnarray*}
		\PP\left(\mathcal{R}_{\eta,2}>y/4\right)	
		= \PP\Big(  \sum_{k=0}^{m-1}\Ll \nabla f(\omega_k),\nabla P(\omega_k)-\nabla\psi(\omega_k,\zeta_{k+1})\Rr >\frac{\sqrt{m\delta}y}{4\sqrt\eta} \Big).
	\end{eqnarray*}
	Since $\nabla\psi(0,\zeta_{k+1})$ is sub-Gaussian from Assumption \ref{assu2}, we have
	$$ \E|\nabla\psi(0,\zeta_{k+1})|^i\le Ci!.
	$$
	By \eqref{e:psibound1} and \eqref{e:pbound1}, we have	
	\begin{eqnarray*}
		\E_{k}|\nabla P(\omega_{k})-\nabla\psi(\omega_{k},\zeta_{k+1})|^i
		&\le& \E_k\big[2L|\omega_k|+|\nabla P(0)|+|\nabla \psi(0,\zeta_{k+1})| \big]^i\\
		&\le& C^i(1+|\omega_k|^i+i!),
	\end{eqnarray*}
	which satisfies the condition of Lemma \ref{lem:RC} with $\alpha=1$. Thus, \eqref{e:RC1} yields
	\begin{eqnarray*}
		\PP\left(\mathcal{R}_{\eta,2}>y/4\right)	\le C\exp\{-c\delta^\frac15 y^\frac25\eta^{-\frac1{5}}\},
	\end{eqnarray*}
	under the condition $y>\sqrt{\eta/\delta}$.
	$\PP\left(\mathcal{R}_{\eta,2}<-y/4\right)$ can be estimated similarly. Thus \eqref{e:R-2} is proved.
	
	\textit{c) Control of $\mathcal{R}_{\eta,3}$.} Let $A=\{|\Delta\omega_k|<y_1\le1, k=0,1,...,m-1\}$, we have
	\begin{eqnarray*}
		&&\PP(\mathcal{R}_{\eta,3}>y/4)\\
		&=&\PP\Big(\sum_{k=0}^{m-1}
		\int_0^1\int_0^1s \big\Ll \frac{\nabla^2 f(\omega_k+ss'\Delta\omega_k)-\nabla^2f(\omega_k )}{|ss'\Delta\omega_k|}, \Delta \omega_k\Delta \omega_k^\top\big\Rr_\mathrm{HS}|ss'\Delta\omega_k|\dif s'\dif s>\frac{\sqrt{m\eta\delta}y}4
		\Big)\\
		&\le& \PP\Big(\sum_{k=0}^{m-1}
		\int_0^1\int_0^1  \frac{|\nabla^2 f(\omega_k+ss'\Delta\omega_k)-\nabla^2f(\omega_k )|}{|ss'\Delta\omega_k|} |\Delta \omega_k|^3\dif s'\dif s>\frac{\sqrt{m\eta\delta}y}4, A
		\Big)+\PP(A^C)\\
	\end{eqnarray*}
	For the first term, \eqref{e:regularity4} implies
			\begin{eqnarray*}
		&&\PP\Big(\sum_{k=0}^{m-1}
		\int_0^1\int_0^1  \frac{|\nabla^2 f(\omega_k+ss'\Delta\omega_k)-\nabla^2f(\omega_k )|}{|ss'\Delta\omega_k|} |\Delta \omega_k|^3\dif s'\dif s>\frac{\sqrt{m\eta\delta}y}4, A
		\Big)\\
		&\le& \PP\big(\sum_{k=0}^{m-1}
		C(1+|\omega_k|^5)|\Delta\omega_k|^31_{\{|\Delta\omega_k|<y_1\}}\ge \sqrt{m\eta\delta}y\big)\\
		&\le& \PP\big(\sum_{k=0}^{m-1}
		C(1+|\omega_k|^5)|\Delta\omega_k|^31_{\{|\Delta\omega_k|<y_1\}}\ge \sqrt{m\eta\delta}y, |\omega_k|<y_2 \text{ for any } k\big)\\
		&&~~+\PP(\max_{k\in\{0,...,m-1\}}|\omega_k|\ge y_2)\\
		&\le& \exp\big\{-\frac{C(\sqrt{m\eta\delta}y-m(\eta\delta)^{\frac32})^2}{my_2^{10}y_1^6}\big\}+Cme^{-y_2^2},
	\end{eqnarray*}
	where the last inequality follows  \cite[Theorem 2]{dedecker2015subgaussian} and the fact $\E|\Delta\omega_k|^3\le C(\eta\delta)^{\frac32}$.

	 For the second term, a straight calculation implies
	\begin{eqnarray*}
		\PP(A^C)&\le&\sum_{k=0}^{m-1}\PP(|\Delta \omega_k|>y_1)\\
		&\le&\sum_{k=0}^{m-1}\PP(\eta|\nabla \psi(\omega_k,\zeta_{k+1})|)>y_1/2)+\sum_{k=0}^{m-1}\PP(\sqrt{\eta\delta}|\xi_{k+1}|>y_1/2)\\
		&\le& \sum_{k=0}^{m-1}\PP( |\omega_k|>\frac{Cy_1}\eta)+\sum_{k=0}^{m-1}\PP(|\nabla\psi(0,\zeta_{k+1})|)>\frac{Cy_1}\eta)+\sum_{k=0}^{m-1}\PP(|\xi_{k+1}|>\frac{Cy_1}{\sqrt{\eta\delta}})\\
		&\le& 2me^{-Cy_1^2/\eta^2}+me^{-Cy_1^2/(\eta\delta)},
	\end{eqnarray*}
	where the second inequality follows the iteration of $\omega_k$, the last inequality follows Lemma \ref{lem:expmoment} and Assumption \ref{assu2}. Combining the calculation above, we can get
	\begin{eqnarray*}
		\PP(\mathcal{R}_{\eta,3}>y/4)\le \exp\big\{-\frac{C(\sqrt{m\eta\delta}y-m(\eta\delta)^{\frac32})^2}{my_2^{10}y_1^6}\big\}+Cme^{-y_2^2}+2me^{-Cy_1^2/\eta^2}+me^{-Cy_1^2/(\eta\delta)}.
	\end{eqnarray*}
	Taking $y_1=y^{1/9}\eta^{7/18}\delta^{7/18} $ and $y_2= y^{1/9}\eta^{-1/9}\delta^{-1/9}$, we complete the proof of \eqref{e:R-3}, that is,
	\begin{eqnarray*}
		\PP\left(\left\vert\mathcal{R}_{\eta,3}\right\vert >y/4\right)& \le & Ce^{-cy^{\frac29}\eta^{-\frac2{9}}\delta^{-\frac2{9}}} ,
	\end{eqnarray*}
	for  $c(\sqrt{m}\eta\delta \vee \eta\delta) <y<C\eta^{-\frac72}\delta^{-\frac72}$.
	
	\textit{d) Control of $\mathcal{R}_{\eta,4}$.} Following the notations of $\Sigma(\omega_k)$ and $\Delta\omega_k$, we have
	\begin{align}\label{e:eqR2}
		&\PP(\mathcal{R}_{\eta,4}>y/4)\\
		=& \PP\big(\frac{1}{2\sqrt{m\eta\delta}}\sum_{k=0}^{m-1}\Ll \nabla^2f(\omega_k ), 
		\eta^2I_{1,k}+\eta\delta I_{2,k}+\eta^{\frac32}\delta^{\frac12} I_{3,k}+\eta^{2} I_{4,k}
		\Rr_\mathrm{HS}>y/4
		\big)\nonumber\\
		\le& \PP\big(\sum_{k=0}^{m-1}\Ll \nabla^2f(\omega_k ), 
		I_{1,k}\Rr_\mathrm{HS}>C m^{\frac12}\delta^{\frac12}\eta^{-\frac32} y
		\big)
		+
		\PP\big(\sum_{k=0}^{m-1}\Ll \nabla^2f(\omega_k ), 
		I_{2,k}\Rr_\mathrm{HS}>C m^{\frac12}\eta^{-\frac12}\delta^{-\frac12} y
		\big)\nonumber\\
		&+
		\PP\big(\sum_{k=0}^{m-1}\Ll \nabla^2f(\omega_k ), 
		I_{3,k}\Rr_\mathrm{HS}>C m^{\frac12}\eta^{-1} y
		\big)+\PP\big(\sum_{k=0}^{m-1}\Ll \nabla^2f(\omega_k ), 
		I_{4,k}\Rr_\mathrm{HS}>C m^{\frac12}\delta^{\frac12}\eta^{-\frac32} y
		\big),\nonumber
	\end{align}
	where
	\begin{eqnarray*}
		I_{1,k}&=&\E_k[\nabla\psi(\omega_k,\zeta_{k+1})\nabla\psi(\omega_k,\zeta_{k+1})^\top]-\nabla\psi(\omega_k,\zeta_{k+1})\nabla\psi(\omega_k,\zeta_{k+1})^\top,
		\\
		I_{2,k}&=& I_d-\xi_{k+1}\xi_{k+1}^\top,\\
		I_{3,k}&=&\nabla \psi(\omega_k,\zeta_{k+1})\xi_{k+1}^\top
		+\xi_{k+1}\nabla \psi(\omega_k,\zeta_{k+1})^\top,\\
		I_{4,k}&=&-\nabla P(\omega_k)\nabla P(\omega_k)^\top.
	\end{eqnarray*}
	
	For the first term of \eqref{e:eqR2}.
	According to \eqref{e:psibound1} and Assumption \ref{assu2},	
	it is easy to verify that
	\begin{eqnarray*}
		\E_{k}|I_{1,k}|^i
		&\le& C^i(1+|\omega_k|^{2i}+i!),
	\end{eqnarray*}
	which satisfies the condition of Lemma \ref{lem:RC} with $\alpha=2$. Thus, \eqref{e:RC2} yields
	\begin{eqnarray}\label{e:eqR3}
		\PP\big(\sum_{k=0}^{m-1}\Ll \nabla^2f(\omega_k ), 
		I_{1,k}\Rr_\mathrm{HS}>C m^{\frac12}\delta^{\frac12}\eta^{-\frac32} y
		\big)	\le C\exp\{-c\delta^\frac17 y^\frac27\eta^{-\frac37}\}
	\end{eqnarray}
as $(\eta/\delta)^{1/2}<y$. Similar to the estimation of \eqref{e:eqR3}, one can also verify that $I_{2,k}$ and $I_{3,k}$ satisfy the condition \eqref{e:RC} with $\alpha=0$ and $\alpha=1$ respectively, thus \eqref{e:RC2} implies
	\begin{eqnarray}\label{e:eqR4}
		\PP\big(\sum_{k=0}^{m-1}\Ll \nabla^2f(\omega_k ), I_{2,k}\Rr_\mathrm{HS}>C m^{\frac12}\eta^{-\frac12}\delta^{-\frac12} y
		\big)\le C\exp\{-c\eta^{-\frac15}\delta^{-\frac15}y^{\frac25}\},
	\end{eqnarray}
	and
	\begin{eqnarray}\label{e:eqR5}
		\PP\big(\sum_{k=0}^{m-1}\Ll \nabla^2f(\omega_k ), I_{3,k}\Rr_\mathrm{HS}>C m^{\frac12}\eta^{-1} y\big)\le C\exp\{-cy^{\frac13}\eta^{-\frac13}\}.
	\end{eqnarray}
	For the last term of \eqref{e:eqR2}, let $\hat\omega_k=\omega_k1_{\{|\omega_k|\le y_3\}}$. Similar to the estimation of \eqref{e:RC1}, \eqref{e:pbound1} and \eqref{e:regularity3} yield
	\begin{eqnarray*}
		&&\PP\big(\sum_{k=0}^{m-1}\Ll \nabla^2f(\omega_k ), 
		I_{4,k}\Rr_\mathrm{HS}>C m^{\frac12}\delta^{\frac12}\eta^{-\frac32} y
		\big)\\
		&\le&  \PP\big(\sum_{k=0}^{m-1}\Ll \nabla^2f(\hat\omega_k ), 
		-\nabla P(\hat\omega_k)\nabla P(\hat\omega_k)^\top\Rr_\mathrm{HS}>C m^{\frac12}\delta^{\frac12}\eta^{-\frac32} y\big)+\sum_{k=0}^{m}\PP(|\omega_k|\ge y_3)\\
		&\le&  \PP\big(\sum_{k=0}^{m-1}
		(1+|\hat\omega_k|^6)>C m^{\frac12}\delta^{\frac12}\eta^{-\frac32} y\big)+mCe^{-cy_3^2}.
	\end{eqnarray*}
	For the above probability, we have
	\begin{eqnarray*}
		&&  \PP\big(\sum_{k=0}^{m-1}
		(1+|\hat\omega_k|^6)>C m^{\frac12}\delta^{\frac12}\eta^{-\frac32} y\big)\\
		&=&\PP\big(\sum_{k=0}^{m-1}
		(|\hat\omega_k|^6-\E|\hat\omega_k|^6)>Cm^{\frac12}\delta^{\frac12}\eta^{-\frac32} y-m-\sum_{k=0}^{m-1}\E|\hat\omega_k|^6\big)\\
		&\le&  \PP\big(\sum_{k=0}^{m-1}
		(|\hat\omega_k|^6-\E|\hat\omega_k|^6)>C m^{\frac12}\delta^{\frac12}\eta^{-\frac32} y-m)\big)\\
		&\le& \exp\{-y^2\delta\eta^{-3}y_3^{-12}\}.
	\end{eqnarray*}
	Thus,
	\begin{eqnarray}\label{e:eqR7}
		\PP\big(\sum_{k=0}^{m-1}\Ll \nabla^2f(\omega_k ), 
		I_{4,k}\Rr_\mathrm{HS}>C m^{\frac12}\delta^{\frac12}\eta^{-\frac32} y
		\big)\le C\exp\{-cy^{\frac27}\delta^{\frac17}\eta^{-\frac37}\},
	\end{eqnarray}
	by taking $y_3=(y^2\eta^{-3}\delta)^{1/14}$ and $y>m^{\frac12}\eta^{\frac32}\delta^{-\frac12}$. Combing the result of \eqref{e:eqR4}-\eqref{e:eqR7}, we can get the bound of \eqref{e:eqR2}, that is
	\begin{eqnarray*}
		&&\PP(|\mathcal{R}_{\eta,4}|>y/4)\le	Ce^{-cy^{\frac27}\delta^{\frac17}\eta^{-\frac37}}+Ce^{-cy^{\frac25}\delta^{-\frac15}\eta^{-\frac15}}.
	\end{eqnarray*}

Thus we obtain that
	\begin{equation*}
	\PP(|\mathcal{R}_\eta|>y)\le C\Big( e^{-cy\eta^{\frac12}\delta^\frac12m^{\frac12} }
	+e^{-cy^\frac25\delta^\frac15 \eta^{-\frac1{5}}}
	+e^{-cy^{\frac29}\eta^{-\frac29}\delta^{-\frac29} }
	+e^{-cy^{\frac27}\delta^{\frac17}\eta^{-\frac37}}\Big)
\end{equation*}
for $c(\eta^{\frac12}\delta^{-\frac12}\vee m^{\frac12}\eta\delta)\le y\le C\eta^{-\frac72}\delta^{-\frac72}$.

\end{proof}

\begin{appendix}

\section{Proof of Lemma \ref{lem:1}}\label{appen:lem1}
\begin{proof}[Proof of Lemma \ref{lem:1}]
	Since $\nabla P(x)=\E[\nabla\psi(x,\zeta)]$, it is easy to see that $\nabla P$ has the same properties, that is,
	\begin{eqnarray*}
		|\nabla P(x)-\nabla P(y)|\le L|x-y|,
	\end{eqnarray*}
	\begin{eqnarray*}
		\Ll x-y,-\nabla P(x)+\nabla P(y)\Rr\le -K_1|x-y|^2+K_2,
	\end{eqnarray*}
	for any $x,y\in \R^d$.
	
	Following the assumptions \eqref{e:lip} and \eqref{e:dissi}, we can further have the bounds for $\nabla \psi(x,z)$ and $\nabla P(x)$, that is,
	\begin{eqnarray*}
		|\nabla\psi(x,\zeta)|\le L|x|+|\nabla\psi(0,\zeta)|,
	\end{eqnarray*} 
	\begin{eqnarray*}
		|\nabla P(x)|\le L|x|+|\nabla P(0)|.
	\end{eqnarray*}
	Assumptions \eqref{e:dissi}, \eqref{e:pdissi} and Young's inequality imply
	\begin{align}\label{e:psibound2}
		\Ll x, -\nabla \psi(x,\zeta)\Rr&=\Ll x-0, -\nabla \psi(x,\zeta)+\nabla \psi(0,\zeta)\Rr- \Ll x, \nabla \psi(0,\zeta)\Rr   \nonumber \\
		&\le-K_1|x|^2+K_2+\frac{K_1}{2}|x|^2+\frac{1}{2K_1}|\nabla\psi(0,\zeta)|^2 \nonumber\\
		&=-\frac{K_1}{2}|x|^2+K_2+\frac{1}{2K_1}|\nabla\psi(0,\zeta)|^2.
	\end{align}
	Similarly,	
	\begin{align}\label{e:pbound2}
		\Ll x, -\nabla P(x)\Rr
		&\le -\frac{K_1}{2}|x|^2+K_2+\frac{1}{2K_1}|\nabla P(0)|^2
	\end{align}
	Moreover,
	\begin{eqnarray}\label{e:sbound}
		\|\Sigma(x)\|\le 2\E|\nabla\psi(x,\zeta)|^2\le 4L^2|x|^2+C.
	\end{eqnarray}
	
	For the Lipschitz property of $Q_{\eta,\delta}$, recall that	
	\begin{equation*}
		Q_{\eta,\delta}(x)=\big(\E[V_{\eta,\delta}(x,\zeta,\xi)V_{\eta,\delta}(x,\zeta,\xi)^\top]\big)^{\frac12}
	\end{equation*}
	By Assumption \ref{e:lip} and \ref{e:plip}, the definition of $V_{\eta,\delta}(x,\zeta,\xi)$ implies that
	\begin{eqnarray*}
		|V_{\eta,\delta}(x,\zeta,\xi)-V_{\eta,\delta}(y,\zeta,\xi)|\le 2\sqrt\eta L|x-y|,	
	\end{eqnarray*}
	which is Lipschitz. Denote the L2 norm $\|X\|_{L^2}=(\E\|X\|^2)^{\frac12}$ for any random variable $X$. Then we have
	\begin{eqnarray*}
		Q_{\eta,\delta}(x)=\| V_{\eta,\delta}(x,\zeta,\xi)V_{\eta,\delta}(x,\zeta,\xi)^\top)^{\frac12}  \|_{L^2}
	\end{eqnarray*}
	Thus,
	\begin{eqnarray*}
		&&\|Q_{\eta,\delta}(x)-Q_{\eta,\delta}(y)\|\\
		&=&\big\| \| V_{\eta,\delta}(x,\zeta,\xi)V_{\eta,\delta}(x,\zeta,\xi)^\top)^{\frac12}  \|_{L^2}-\| V_{\eta,\delta}(y,\zeta,\xi)V_{\eta,\delta}(y,\zeta,\xi)^\top)^{\frac12}  \|_{L^2}   \big\|\\
		&\le&\| ( V_{\eta,\delta}(x,\zeta,\xi)V_{\eta,\delta}(x,\zeta,\xi)^\top)^{\frac12}  - (V_{\eta,\delta}(y,\zeta,\xi)V_{\eta,\delta}(y,\zeta,\xi)^\top)^{\frac12}  \|_{L^2}   
	\end{eqnarray*}
	Since the mapping $V_{\eta,\delta}\to \big(V_{\eta,\delta}V_{\eta,\delta}^\top \big)^{\frac12}=V_{\eta,\delta}V_{\eta,\delta}^\top/|V_{\eta,\delta}|$ is Lipschitz, we have
	\begin{eqnarray*}
		\|Q_{\eta,\delta}(x)-Q_{\eta,\delta}(y)\|\le C\sqrt \eta |x-y|.
	\end{eqnarray*}
\end{proof}

\section{Proof of Ergodicity}\label{appen:ergodic}
\begin{proof}[Poor of Lemma  \ref{lem:ergodic}]
	We first give the proof of the ergodicity of $(X_t)_{t\ge0}$.  For the Lyapunov function $V(x)=|x|^2+1$ on $\R^d$, \eqref{e:generator}, \eqref{e:pbound2} and \eqref{e:sbound} imply
	\begin{eqnarray*}
		\mathcal L V(x)&=&-\Ll \nabla P(x), 2x\Rr+\Ll \eta\Sigma(x)+\delta I_d, I_d  \Rr_{\textrm {HS}}\\
		&\le& -K_1|x|^2+4\eta L^2|x|^2+C.
	\end{eqnarray*}
	For small enough $\eta\le K_1/(8L^2)$, one has
	\begin{eqnarray*}
		\mathcal L V(x) &\le& -\frac {K_1}4V(x) +(C+\frac {K_1}4)1_{\{|x|^2\le K_1+4C\}}.
	\end{eqnarray*}
	By \cite[Theorem 6.1]{MeTw93}, $(X_t)_{t\ge0}$  is exponential ergodic with invariant measure $\pi$, that is, there esxist constant $C$ and $c$ such that
	\begin{eqnarray}\label{e:ergod1}
		\sup_{|h|\le V}|\E h(X_t^x)-\pi(h)|\le CV(x)e^{-ct}.
	\end{eqnarray}
	
	The ergodicity of $(\omega_k)_{k\ge0}$ follows  \cite[Theorem 2.1]{Tweedie94}. Notice that
	\begin{eqnarray*}
		\E_k[V(\omega_{k+1})]&=& 1+ \E_{k}|\omega_k-\eta\nabla\psi(\omega_k,\zeta_{k+1})+\sqrt{\eta\delta}\xi_{k+1}|^2\\
		&=& 1+ |\omega_k|^2+\eta^2\E_{k}|\nabla\psi(\omega_k,\zeta_{k+1})|^2+\eta\delta d- 2\eta\Ll \omega_k, \nabla P(\omega_k)\Rr\\
		&\le& (1+2\eta^2L^2-\eta K_1)|\omega_k|^2+1+C\eta.
	\end{eqnarray*}
	Denote the transition probability of $(\omega_k)_{k\ge0}$ by $P(x,\dif y)$ for $x,y\in\R^d$ and let
	\begin{align*}
		\quad V^n (x)=e^{c_1 n\eta}V(x),\quad r(n)= c_1\eta e^{c_1 n\eta}.
	\end{align*}
	A straightforward calculation implies 
	\begin{eqnarray*}
		&&PV^{n+1}(x)+r(n)V(x)\\
		&=&e^{c_1(n+1)\eta}P V(x)+c_1\eta e^{c_1n\eta}V(x)\\
		&\le& e^{c_1(n+1)\eta}\big((1+2\eta^2L^2-\eta K_1)|x|^2+1+C\eta\big)+c_1\eta e^{c_1n\eta}V(x)\\
		&=& e^{c_1n\eta}V(x)+c_1\eta e^{c_1n\eta}\big[\big(\frac {e^{c_1\eta}}{c_1\eta}(1+2\eta^2L^2-\eta K_1)+1-\frac{1}{c_1\eta}\big)V(x)+\frac {e^{c_1\eta}}{c_1\eta}(C\eta+\eta K_1-2\eta^2L^2)\big]\\
		&=& V^n(x)+r(n)\big[\frac {1}{c_1\eta}\big(e^{c_1\eta}(1+2\eta^2L^2-\eta K_1)+c_1\eta-1\big)V(x)+\frac {e^{c_1\eta}}{c_1\eta}(C\eta+\eta K_1-2\eta^2L^2)\big].
	\end{eqnarray*}
	Choosing $\eta$ small enough such that $e^{c_1\eta}(1+2\eta^2L^2-\eta K_1)+c_1\eta<1$, we can get
	\begin{eqnarray*}
		P V^{n+1}(x)+r(n)V(x)
		&\le& V^n(x)+br(n)1_{\{x\in \mathcal{C}\}},
	\end{eqnarray*}
	where $b=\frac {e^{c_1\eta}}{c_1\eta}(C\eta+\eta K_1-2\eta^2L^2)$ and $\mathcal{C}=\{x:V(x)\le \frac{  e^{c_1\eta}(C\eta+\eta K_1-2\eta^2 L^2)}{1-e^{c_1\eta}(1+2\eta^2L^2-2\eta K_1)-c_1\eta}\}$.  \cite[Theorem 2.1]{Tweedie94} implies that $(\omega_k)_{k\ge0}$ is ergodic with invariant measure $\pi_\eta$, that is, there exist constant $C$ and $c$ such that
	\begin{eqnarray}\label{e:ergod2}
		\sup_{|h|\le V}|\E h(\omega_k^x)-\pi_\eta(h)|\le C\eta^{-1}V(x)e^{-ck\eta}.
	\end{eqnarray} 	
\end{proof}
\end{appendix}

\section*{Acknowledgements}
Hongsheng Dai is supported by the EPSRC research grant "Pooling INference and COmbining Distributions Exactly: A Bayesian approach (PINCODE)", EP/X027872/1, and by the UKRI grant EP/Y014650/1 as part of the ERC Synergy project OCEAN. Xiequan Fan is partially supported by the National Natural Science Foundation of China (Grant No. 12371155). For the purpose of open access, the authors have applied a Creative Commons Attribution (CC BY) license to any Author Accepted Manuscript version arising from this submission.

\bibliographystyle{plainnat}
\bibliography{reference}
\end{document}